\documentclass[11pt,twoside]{article}
\usepackage{a4}
\usepackage{amssymb,amsmath,amsthm,latexsym}
\usepackage{amsfonts}
  \usepackage{amsfonts}
\usepackage{graphicx}

\newtheorem{theorem}{Theorem}[section]

\newtheorem{corollary}[theorem] {Corollary}
\newtheorem{definition}[theorem]{Definition}

\newtheorem{question}[theorem]{Question}
\newtheorem{lemma} [theorem]{Lemma}

\vspace{5cm}

\begin{document}
  
  \label{'ubf'}  
\setcounter{page}{1}                                 

\markboth {\hspace*{-9mm} \centerline{\footnotesize \sc
   A concept of largness of monochromatic sums and products in large integral domain  }
                 }
                { \centerline                           {\footnotesize \sc  
         Pintu Debnath                                                 } \hspace*{-9mm}              
               }

\vspace*{-2cm}

\begin{center}
{ 
       {\Large \textbf { \sc A concept of largness of monochromatic sums and products in large integral domain
                               }
       }
\\

\medskip

\author{Debnath, Pintu}
{\sc Pintu Debnath }\\
{\footnotesize Department of Mathematics,
			Basirhat College,
			Basirhat-743412, North 24th Parganas, West Bengal, India.}\\

{\footnotesize e-mail: {\it pintumath1989@gmail.com}}
}
\end{center}

\thispagestyle{empty}

\hrulefill

\begin{abstract}
 An infinite integral domain $R$ is called a large ideal domain (LID) if every nontrivial ideal of $R$ has finite index in $R$. Recently,  N. Hindman and D. Strauss have established a refinement of Moreira's theorem for the set of natural numbers and infinite fields. In this article, we prove the same result of N. Hindman and D. Strauss for large ideal domains (LID) and a polynomial extension.

\end{abstract}

\textbf{MSC 2020:} 05D10, 22A15, 54D35.

\textbf{Keywords:}  Piecewise syndetic set, $IP$-set, Large ideal domain, Moreira's 
theorem, Polynomial van der Waerden’s theorem, Algebra of the  Stone-\v{C}ech compactifications of discrete semigroups.

\section{Introduction}

We begin this introductory section with a well-known and still open
problem posed by N.~Hindman, I.~Leader, and D.~Strauss
\cite[Question~3]{HLS}.

\begin{question}
If the natural numbers are finitely colored, must there exist
$x,y\in\mathbb{N}$ such that
\[
x,\; y,\; x+y,\; \text{and } xy
\]
are all monochromatic?
\end{question}

In \cite{M17}, J.~Moreira established a significant partial answer to
this question.

\begin{theorem}[Moreira]\label{Moreira's Theorem}
\cite[Corollary~1.5]{M17}
For any finite coloring of $\mathbb{N}$, there exist infinitely many
pairs $x,y\in\mathbb{N}$ such that the set
\[
\{x,xy,x+y\}
\]
is monochromatic.
\end{theorem}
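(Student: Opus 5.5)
The plan is to follow Moreira's strategy. It is a color-focusing (van der Waerden / Hales--Jewett style) induction carried out in the multiplicative setting, combined with a polynomial recurrence input. The target we aim for is slightly stronger than the statement, so that the induction closes: for any finite coloring $\mathbb{N}=C_1\cup\dots\cup C_r$ there is a color class $C$ that is \emph{multiplicatively piecewise syndetic}, and for every such $C$ there exist $x,y$ with $x\in C$, $xy\in C$, $x+y\in C$. Infinitely many pairs then follow because the argument can be run inside any multiplicatively large subset. Concretely, one may restrict to $x$ divisible by an arbitrary $N$, or pass to a thick set avoiding a finite set. The reduction to a single piecewise syndetic color is standard for the semigroup $(\mathbb{N},\cdot)$: one of the finitely many color classes belongs to a minimal idempotent of $(\beta\mathbb{N},\cdot)$.

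The core step is a ``multiplicative--additive'' lemma. Let $A$ be multiplicatively piecewise syndetic. Then there are finitely many $t_1,\dots,t_k$ such that for every finite $F\subset\mathbb{N}$ the set
\[
\bigcap_{y\in F}\bigcup_{i\le k} t_i^{-1}A\,y^{-1}
\]
is nonempty. Moreover, the additive shifts $A-y$ are controlled by the following recurrence statement: for every multiplicatively syndetic or thick set $B$ and every $x$, there is $y$ in $xB$ (more generally, in a suitable polynomial family in $x$) with $x+y$ landing in the prescribed set. I would derive this from a polynomial van der Waerden / IP-polynomial Szemer\'edi-type recurrence. The key identity is
\[
x\,(1+y/x)=x+y.
\]
This identity converts the additive condition $x+y\in C$ into a multiplicative translate of $C$ evaluated along $1+\text{something}$. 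Additive largeness along the progression $\{x+xz\}$ is then handled by applying the polynomial van der Waerden theorem to the maps $z\mapsto xz$.

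Next I would build a sequence $x_1,x_2,\dots,x_m$ with $m>r$ by color focusing. At stage $j$ we have a configuration where the $r$ colors are ``focused'': for each $i<j$, the points $x_i$, $x_i y_{i,j}$ and $x_i+y_{i,j}$ have one color $c_i$, except that the sum lies only in a shifted copy. To extend, we choose the new multiplier $y$ from the intersection produced by piecewise syndeticity. We apply polynomial recurrence so that all previous sums are simultaneously realized with a common new element. Doing it for all $i<j$ at once requires a multidimensional polynomial van der Waerden theorem, which I take as available. After $r+1$ stages two indices share a color. Writing the pair in the form $(x,y)$ obtained by telescoping the products then gives $\{x,xy,x+y\}$ monochromatic.

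The main obstacle is the extension step. There the additive condition $x+y\in C$ must be made compatible with the multiplicative condition $xy\in C$ for many previously chosen $x_i$ simultaneously. Additive and multiplicative largeness do not interact directly, so the bridge must come from the identity above together with the fact that multiplicatively syndetic sets are hit by polynomial progressions of additive type. I would try first to prove that for multiplicatively piecewise syndetic $A$ and any finite set of $x_i$, the set $\{y: x_i+y\in A\ \forall i\}\cap\{y: x_i y\in A\}$ is nonempty. I would attempt this via the topological dynamical system of the multiplicative action and affine maps $u\mapsto au+b$, mirroring Moreira's use of the affine semigroup.
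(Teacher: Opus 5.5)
\textbf{There is a genuine gap.} Your argument rests on \emph{multiplicative} piecewise syndeticity, and both your strengthened target and your core lemma are false. So the extension step you flag as the main obstacle cannot be done in that form. (The paper does not prove this statement; it only cites it from Moreira.)

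Here is a counterexample. Take primes $p_1<p_2<\cdots$ with $p_{k+1}>4k^2p_k^2$ and let $C=\bigcup_k\{p_k,2p_k,\ldots,kp_k\}$. Then $C$ contains $Fp_k$ whenever $F\subseteq\{1,\ldots,k\}$, so it is multiplicatively thick, hence multiplicatively piecewise syndetic. Suppose $x=ap_k$, $x+y=cp_m$ and $xy=bp_l$ all lie in $C$.
\begin{itemize}
\item $m<k$ is impossible by size, since $x+y>p_k>mp_m$.
\item $m=k$ gives $xy=a(c-a)p_k^2$. For $l\le k$ this is too large to equal $bp_l$. For $l>k$ it is not divisible by $p_l$, since $0<a(c-a)<k^2<p_l$.
\item $m>k$ gives $xy\equiv -a^2p_k^2\not\equiv 0 \pmod{p_m}$, which rules out $l=m$. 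For $l<m$ we have $xy\ge y\ge p_m-kp_k>(m-1)p_{m-1}\ge bp_l$, and for $l>m$ we have $xy<kp_k\cdot mp_m<p_l$.
\end{itemize}
So the class $C$ of the coloring $\{C,\mathbb{N}\setminus C\}$ is multiplicatively large but contains no $\{x,xy,x+y\}$. Your lemma fails for $A=C$ and any single $x_1\in C$. For $x_1\notin A$ it fails even more simply: take $A=2\mathbb{N}$ and $x_1=1$.

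Multiplicative largeness says nothing about additive shifts, and the identity $x(1+y/x)=x+y$ does not change that. Additive largeness of one fixed class is not enough either: the odd numbers are additively syndetic, yet $x$ and $x+y$ odd forces $xy$ even. The color has to come out of the pigeonhole over focusing stages, not from a preselected large class.

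\textbf{The missing idea.} Moreira's idea, which the paper implements in its proof of Theorem~\ref{main theorem}, keeps all largeness \emph{additive}. Multiplication enters only through dilations and telescoping products. Two facts are needed.
\begin{itemize}
\item If $B$ is piecewise syndetic in $(\mathbb{N},+)$ and $c_1,\ldots,c_k\in\mathbb{N}$, then $B\cap\bigcap_j(B-c_jy)$ is again additively piecewise syndetic for an $IP^{\star}$-set of $y$. This is Hindman's theorem quoted in Section~2, or Theorem~\ref{Ip pol van der LID PS} for rings. It matters that the output is piecewise syndetic, not merely nonempty, because that is what lets the induction continue. Only linear maps $y\mapsto c_jy$ are needed here, not a multidimensional polynomial theorem.
\item $yD$ is additively piecewise syndetic whenever $D$ is (the $\mathbb{N}$-analogue of Lemma~\ref{dilation ps}).
\end{itemize}

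The construction runs as follows. Start with an additively piecewise syndetic color class $B_0=C_{t_0}$. Given $B_0,\ldots,B_k$ and $y_1,\ldots,y_k$, put $u_j=y_k\cdots y_{j+1}$, with $u_k=1$. Choose $y_{k+1}>1$ so that $D_{k+1}=B_k\cap\bigcap_{j=0}^{k}(B_k-u_j^2y_{k+1})$ is additively piecewise syndetic. Then pick a color $t_{k+1}$ for which $B_{k+1}=y_{k+1}D_{k+1}\cap C_{t_{k+1}}$ is piecewise syndetic. By construction $B_m\subseteq y_m\cdots y_{j+1}B_j$ for all $j<m$.

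Among $t_0,\ldots,t_r$ two agree, say $t_j=t_m$ with $j<m$. Put $y=y_m\cdots y_{j+1}$ and $u=y_{m-1}\cdots y_{j+1}$. Take $w=y_md\in B_m$ with $d\in D_m$. Then $d\in B_{m-1}\subseteq uB_j$, so $d=ux$ with $x\in B_j$, and $xy=w\in C_{t_m}$. Moreover $u(x+y)=d+u^2y_m\in B_{m-1}\subseteq uB_j$, so $x+y\in B_j$ by cancellation. Thus $\{x,xy,x+y\}\subseteq C_{t_j}$.

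The bridge between the two operations is distributivity applied to additively large sets, not any multiplicative largeness.
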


\begin{definition}[Piecewise syndetic]
Let $(S,+)$ be a commutative semigroup and let $A\subseteq S$.  
The set $A$ is called \emph{piecewise syndetic} if and only if there
exists $G\in\mathcal{P}_f(S)$ such that for every
$F\in\mathcal{P}_f(S)$ there exists $x\in S$ satisfying
\[
F+x \subseteq \bigcup_{t\in G}(-t+A).
\]
\end{definition}

In \cite[Corollary~1.11]{HS24}, N.~Hindman and D.~Strauss obtained a
refinement of Moreira’s theorem, stated as follows.

\begin{theorem}\label{Moreira by HS}
Let $S=\mathbb{N}$ or an infinite field, let $r\in\mathbb{N}$, and let
$S=\bigcup_{i=1}^{r} C_i$ be a finite coloring of $S$. Then there exist
$i\in\{1,2,\ldots,r\}$ and infinitely many $y\in S$ such that the set
\[
\left\{
x\in S :
\{x,xy,x+y\}\subseteq C_i
\right\}
\]
is piecewise syndetic.
\end{theorem}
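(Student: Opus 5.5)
The plan is to work in the Stone--\v{C}ech compactification $\beta S$ and use the multiplicative structure over additive idempotents. Two standard facts drive it: a set $A\subseteq S$ is additively piecewise syndetic iff $\overline{A}\cap K(\beta S,+)\neq\emptyset$, and when $S=\mathbb{N}$ or an infinite field, $\overline{K(\beta S,+)}$ is a two-sided ideal of $(\beta S,\cdot)$. Hence $(\overline{K(\beta S,+)},\cdot)$ is a compact right topological semigroup and contains a multiplicative idempotent $p$. Fix such a $p$ and let $i$ be the index with $C_i\in p$. The claim I want is that for $p$-many $y$ (in particular infinitely many, since $p$ is nonprincipal), the set $B_y=\{x: x\in C_i,\ xy\in C_i,\ x+y\in C_i\}$ is piecewise syndetic.

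Put $A=C_i$. The conditions $x\in A$ and $xy\in A$ amount to $x\in A\cap y^{-1}A$. The third condition $x+y\in A$, i.e.\ $x\in -y+A$, is harder, since it mixes the two operations. The idea from Moreira's argument, in the ultrafilter form of Hindman--Strauss, is to exploit the fact that $p\in\overline{K(\beta S,+)}$ is additively ``central-like'': every member of $p$ is piecewise syndetic. The goal is to find $y$ such that $A\cap y^{-1}A\cap(-y+A)$ is piecewise syndetic.

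The concrete steps are as follows.
\begin{enumerate}
\item Since $p\cdot p=p$ and $A\in p$, the set $A^\star=\{y\in A: y^{-1}A^\star\in p\}$ lies in $p$, by the standard star construction for idempotents.
\item Choose a minimal additive idempotent $q$. For piecewise syndeticity it suffices to get $\overline{B_y}\cap K(\beta S,+)\neq\emptyset$. So I aim to find, for many $y$, an ultrafilter $r\in K(\beta S,+)$ with $A\cap y^{-1}A\in r$ and $-y+A\in r$, i.e.\ $A\in y+r$.
\item To achieve this, use the ideal property. For $u\in K(\beta S,+)$ and $y\in S$ we have $yu\in K(\beta S,+)$, because multiplication by a nonzero element $y$ is an additive homomorphism and maps the minimal ideal into itself for these $S$. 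Also $y+K\subseteq K$.
\item Apply a polynomial/IP van der Waerden recurrence argument in the additive structure. Consider the family of maps $x\mapsto x$, $x\mapsto xy$, $x\mapsto x+y$ and apply the Bergelson--Leibman type result in its ``piecewise syndetic many $x$'' form for finitely many affine maps. This produces, for a piecewise syndetic set $A$, a piecewise syndetic set of $x$ such that $x,\ xy,\ x+y\in A$ hold simultaneously, for $y$ ranging over a large set.
\end{enumerate}

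The main obstacle is step 4: the shifts are indexed by $y$ while one of the maps is multiplicative in $y$. I would handle it by first passing to $x=yz$, turning the triple into $yz,\ y^2z,\ y(z+1)$. Then I would choose $y$ from a multiplicative IP-set inside $A^\star$ and use the multiplicative idempotent $p$ to absorb dilations: $y^{-1}A\in p$ for $y\in A^\star$. Once this is done, infinitude of the good $y$ follows because $p$ is nonprincipal.
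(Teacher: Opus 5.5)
Your steps 1--3 are fine but peripheral. The gap is step 4, which is the entire theorem. Bergelson--Leibman, and the piecewise syndetic $IP^\star$ refinement proved here as Theorem~\ref{Ip pol van der LID PS}, only handle configurations $\{x+f(y):f\in F\}$, i.e.\ translates of one free variable by polynomial expressions in $y$. The map $x\mapsto xy$ is a dilation, and no version of those theorems accommodates it. A result giving piecewise syndetically many $x$ with $x,xy,x+y\in A$ is Theorem~\ref{Moreira by HS} itself. Your formulation is in fact stronger, since you fix the color $C_i\in p$ in advance and claim $p$-many good $y$.

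The substitution $x=yz$ does not repair this. The idempotent $p$ does handle the multiplicative part: already $A\cap y^{-1}A\in p$ for $y\in A^\star$, with no substitution. But after substituting you need $z\in y^{-1}A\cap(y^2)^{-1}A\cap(-1+y^{-1}A)$. That asks for recurrence of $y^{-1}A$ under the \emph{fixed} shift $1$. It also asks for $(y^2)^{-1}A\in p$, which does not follow from $y\in A^\star$.

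Membership in $p\in\overline{K(\beta S,+)}$ only tells you that sets are piecewise syndetic. Piecewise syndetic sets recur along a large set of shifts that you get to choose, not along a prescribed one; in $\mathbb{N}$, $2\mathbb{N}\cap(-1+2\mathbb{N})=\emptyset$. In the actual problem the shift is $y$ itself, dictated by the multiplicative choice, and your plan has no mechanism reconciling the two.

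The paper quotes this result from Hindman--Strauss. The missing idea is their refinement of Moreira's iteration, which the paper transplants to LIDs in Theorem~\ref{main theorem}: never ask one set to carry both structures at once.
Start from a piecewise syndetic color $B_0$ and repeat three moves.
\begin{enumerate}
\item \textbf{Shift.} Pick $y_{k+1}$ by additive recurrence alone, so that $D_{k+1}=B_k\cap\bigcap_u(B_k-u^2y_{k+1})$ is piecewise syndetic. Here $u$ ranges over $1$ and the products $y_k\cdots y_j$, and such $y_{k+1}$ form an $IP^\star$ set.
\item \textbf{Dilate.} Multiply by that same element: $y_{k+1}D_{k+1}$ is piecewise syndetic by Lemma~\ref{dilation ps}.
\item \textbf{Recolor.} Set $B_{k+1}=y_{k+1}D_{k+1}\cap C_{t_{k+1}}$. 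This gives the nesting $B_m\subseteq y_m\cdots y_{j+1}B_j$ for $j<m$.
\end{enumerate}

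The color $i$ is not fixed in advance; it is a value taken infinitely often by the $t_k$. The good $y$ are the block products $z_n=y_{k(n)}\cdots y_{k(n-1)+1}$ between two stages of color $i$. To check them, write $w=y_{k(n)}d\in B_{k(n)}$ with $d\in D_{k(n)}$, and put $u=y_{k(n)-1}\cdots y_{k(n-1)+1}$. Then $x=w/z_n$ satisfies $ux=d$ and $u(x+z_n)=d+u^2y_{k(n)}$, and both lie in $B_{k(n)-1}\subseteq uB_{k(n-1)}$. Hence $x,\,x+z_n\in B_{k(n-1)}\subseteq C_i$, while $xz_n=w\in C_i$.

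Lemma~\ref{div ps} says that dividing a piecewise syndetic subset of $z_nS$ by $z_n$ keeps it piecewise syndetic. This yields the required piecewise syndetic set of $x$. Uniqueness of finite products makes the $z_n$ distinct, so there are infinitely many good $y$.
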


Examples of large ideal domains (LIDs) include all fields, the ring
$\mathbb{Z}$ (and more generally the ring of integers of any number
field), and the polynomial ring $\mathbb{F}[x]$ over a finite field
$\mathbb{F}$. J.~Moreira proved the following result for LIDs.

\begin{theorem}\cite[Theorem~7.5]{M17}
Let $R$ be a large ideal domain, let $s\in\mathbb{N}$, and for each
$i=1,2,\ldots,s$ let $F_i$ be a finite set of functions
$R^i\to R$ such that for all $f\in F_i$ and all
$x_1,\ldots,x_{i-1}\in R$, the function
\[
x\mapsto f(x_1,x_2,\ldots,x_{i-1},x)
\]
is a polynomial with zero constant term. Then for any finite coloring
of $R$, there exist a color class $C\subseteq R$ and infinitely many
$(s+1)$-tuples $x_0,x_1,\ldots,x_s\in R$ such that
\[
\{x_0x_1\cdots x_s\}
\cup
\Bigl\{
x_0\cdots x_j + f(x_{j+1},\ldots,x_i)
:
0\leq j<i\leq s,\;
f\in F_{i-j}
\Bigr\}
\subseteq C.
\]
\end{theorem}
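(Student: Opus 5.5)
The plan is to follow Moreira's strategy for $\mathbb{N}$ and replace every use of division by the large ideal property. Throughout I would work in $(\beta R,+)$ with a minimal idempotent $p\in K(\beta R,+)$. All ``largeness'' will mean additive piecewise syndeticity, which for a set $A$ amounts to $\overline{A}\cap K(\beta R,+)\neq\emptyset$.

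\textbf{Step 1 (the LID lemma).} The first step is to show: if $A\in p$ for a minimal idempotent $p$ and $x\neq 0$, then $x^{-1}A=\{y\in R: xy\in A\}$ is piecewise syndetic.
\begin{itemize}
\item Since $xR$ is an additive subgroup of finite index, its cosets partition $R$. The map $\beta R\to R/xR$ is a homomorphism, and it sends the idempotent $p$ to the identity coset. Hence $xR\in p$, and so $A\cap xR\in p$.
\item In particular $A\cap xR$ is piecewise syndetic in $R$. Because $xR$ has finite index, it is then piecewise syndetic inside the group $xR$ as well.
\item Multiplication by $x$ is an additive isomorphism $R\to xR$. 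Pulling back gives that $x^{-1}A$ is piecewise syndetic in $R$.
\end{itemize}
This lemma is the substitute for the rescaling $A/x$ used over $\mathbb{N}$ and over fields. It is the only place where the LID hypothesis enters.

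\textbf{Step 2 (polynomial van der Waerden).} The second ingredient is the Bergelson--Leibman polynomial van der Waerden theorem for countable abelian groups, applied to $(R,+)$ in its piecewise syndetic form. It says: if $A$ is piecewise syndetic and $g_1,\dots,g_k:R\to R$ are polynomials with zero constant term, then there are infinitely many $y$ such that
\[
\{a: a+g_1(y),\dots,a+g_k(y)\in A\}
\]
is again piecewise syndetic. In fact the set of such $y$ is an $IP^*$-type set, which allows it to be intersected with finitely many other such sets.

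Because of the hypothesis on $F_i$, after fixing $x_1,\dots,x_{i-1}$ every map $f(x_1,\dots,x_{i-1},\cdot)$ is a polynomial with zero constant term. So Step 2 can be applied one variable at a time, with all earlier variables treated as fixed coefficients.

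\textbf{Step 3 (colour-focusing induction).} With $r$ colours, I would run Moreira's scheme of $r+1$ stages and then pigeonhole.
\begin{itemize}
\item At each stage I keep a piecewise syndetic set of candidate ``heads''. The variables $x_s,x_{s-1},\dots$ are chosen one at a time using Step 2, so that every shifted sum $x_0\cdots x_j+f(x_{j+1},\dots,x_i)$ lands in the current set.
\item Whenever a product $x_0\cdots x_j$ must lie in a prescribed colour, I do not divide by the partial product. Instead I pass to $(x_0\cdots x_j)^{-1}C$, which is piecewise syndetic by Step 1.
\item After $r+1$ stages two stages must carry the same colour $C$. Telescoping the products between those two stages produces the required tuple $x_0,\dots,x_s$, with the product $x_0\cdots x_s$ and all the listed sums in $C$.
\item To get infinitely many tuples, at each application of Step 2 I choose $y$ outside any prescribed finite set. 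This is possible since Step 2 gives infinitely many $y$, and removing finitely many points preserves piecewise syndeticity.
\end{itemize}

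\textbf{Main obstacle.} I expect the main obstacle to be the bookkeeping in Step 3. The sets produced by Step 1 must stay members of a minimal idempotent, and not merely piecewise syndetic, so that Step 1 can be applied again at the next stage. To handle this I would first try replacing $p$ by a suitable translate and using the fact that every piecewise syndetic set has some additive translate in $p$. Then I would check that the choices of Step 2 can be made compatible with that translate, using the $IP^*$ form of the set of good $y$ to intersect the finitely many constraints.
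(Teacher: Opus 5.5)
The paper does not prove this statement; it is quoted from Moreira. So the yardstick is Moreira's argument, whose $s=1$ case the paper imitates in the proof of Theorem~\ref{main theorem}. Your Step~1 is correct. Step~2 is the right tool, with one caveat: an LID need not be countable (every field is one), but Theorem~\ref{Ip pol van der LID PS} gives the $IP^{\star}$/piecewise syndetic form for any infinite commutative ring.

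The gap is Step~3. That step is the whole proof, and you only describe it; the parts you do specify fail.
\begin{itemize}
\item[(i)] \textbf{Order of choice.} The hypothesis makes $f(x_{j+1},\dots,x_i)$ polynomial only in its last argument. So $x_i$ must be fixed after $x_{j+1},\dots,x_{i-1}$. Choosing $x_s,x_{s-1},\dots$ in turn leaves $f(x_{j+1},\dots,x_s)$ an arbitrary function of variables not yet chosen, and Step~2 does not apply. Your remark after Step~2 has the correct order, which contradicts Step~3.
\item[(ii)] \textbf{Counting.} With $r+1$ stages you only get two stages of equal colour, i.e.\ one product block. That is the case $s=1$. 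You need $s+1$ levels $k_0<\dots<k_s$ of one colour $c$; run infinitely many stages and take a colour that recurs infinitely often.
\item[(iii)] \textbf{Idempotent membership.} Step~1 needs the set to lie in an idempotent, and your proposed remedy is false: a fixed minimal idempotent need not contain any translate of a piecewise syndetic set. In $\mathbb{Z}$ take $A=\bigcup_n[4^n,4^n+n]$. Since $\mathbb{Z}\setminus A$ is thick, $\overline{\mathbb{Z}\setminus A}$ contains a minimal left ideal $L$. For a minimal idempotent $p\in L$, every $t+p$ lies in $L$, so no translate of $A$ is in $p$. Translating anyway replaces $x^{-1}A$ by $\{y:xy+t\in A\}$, which is not what the colouring needs.
\end{itemize}

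The missing idea is to multiply rather than divide, using Lemma~\ref{dilation ps}: if $D$ is piecewise syndetic and $y\neq 0$, then $yD$ is piecewise syndetic.
\begin{itemize}
\item[(a)] \textbf{The sets.} Let $B_0=C_{t_0}$ and $B_m=y_mD_m\cap C_{t_m}$, with $t_m$ chosen to keep $B_m$ piecewise syndetic. Choose $y_m\neq0$ by Step~2 so that
\[
D_m=B_{m-1}\cap\bigcap_{g}\bigl(-g(y_m)+B_{m-1}\bigr)
\]
is piecewise syndetic.
\item[(b)] \textbf{The polynomials.} Here $g$ ranges over the finitely many zero-constant-term polynomials $g(y)=u\,f(X_1,\dots,X_{q-1},u'y)$ with $f\in F_q$. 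They come from all possible earlier breakpoints $l_0<\dots<l_{q-1}<m$, where $X_a=y_{l_a}\cdots y_{l_{a-1}+1}$, $u'=y_{m-1}\cdots y_{l_{q-1}+1}$ and $u=y_{m-1}\cdots y_{l_0+1}$.
\item[(c)] \textbf{Divisibility.} Then $B_m\subseteq y_m\cdots y_{l+1}B_l$ for $l<m$ by construction, and no set ever has to lie in an idempotent.
\item[(d)] \textbf{Extraction.} Take $k_0<\dots<k_s$ with $t_{k_a}=c$. Put $x_a=y_{k_a}\cdots y_{k_{a-1}+1}$ for $a\ge1$, and $x_0=w/(x_1\cdots x_s)$ with $w\in B_{k_s}$. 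Then $x_0\cdots x_j\in B_{k_j}\subseteq C_c$.
\item[(e)] \textbf{Sums.} With $m=k_i$ and $u=y_{m-1}\cdots y_{k_j+1}$,
\[
u\bigl(x_0\cdots x_j+f(x_{j+1},\dots,x_i)\bigr)=\frac{x_0\cdots x_i}{y_m}+g(y_m)\in B_{m-1}\subseteq uB_{k_j}.
\]
Cancellation puts the sum in $B_{k_j}\subseteq C_c$. Distinct choices of $w$ give infinitely many tuples.
\end{itemize}
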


An immediate consequence of this theorem is the following corollary.

\begin{corollary}\label{abundance polynomial integer}
Let $R$ be a large ideal domain, let $k\in\mathbb{N}$, and let
$F\subseteq xR[x]$ be finite. Then for any finite coloring of $R$, there
exist $x,y\in R$ such that
\[
\{xy\}\cup\{x+f(y):f\in F\}
\]
is monochromatic.
\end{corollary}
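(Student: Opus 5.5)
Corollary~\ref{abundance polynomial integer} follows directly from Moreira's theorem \cite[Theorem~7.5]{M17} stated above, applied with $s=1$. In that case the only index pair is $(j,i)=(0,1)$. The theorem then produces a color class $C$ and infinitely many pairs $(x_0,x_1)\in R^2$ with
\[
\{x_0x_1\}\cup\{x_0+f(x_1): f\in F_1\}\subseteq C .
\]
Here $x_0\cdots x_j$ with $j=0$ is the empty-index product up to $x_0$, namely $x_0$ itself, and $f(x_{j+1},\ldots,x_i)=f(x_1)$.

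The plan is therefore as follows.

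First, take the given finite set $F\subseteq xR[x]$. Regard each $f\in F$ as a function $R^1\to R$, $y\mapsto f(y)$, and put $F_1=F$. The hypothesis of the theorem for $i=1$ asks that, for every choice of the (empty) tuple $x_1,\ldots,x_{i-1}$, the map $x\mapsto f(x)$ be a polynomial with zero constant term. This holds precisely because $F\subseteq xR[x]$.

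Second, apply \cite[Theorem~7.5]{M17} to the given finite coloring of $R$. We obtain a color class $C$ and some (indeed infinitely many) $x_0,x_1\in R$ with $x_0x_1\in C$ and $x_0+f(x_1)\in C$ for all $f\in F$. Setting $x=x_0$ and $y=x_1$ gives that $\{xy\}\cup\{x+f(y):f\in F\}$ is monochromatic, as required.

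There is essentially no obstacle. The only point needing a little care is reading the indexing correctly: with $j=0$ the product $x_0\cdots x_j$ reduces to $x_0$, and the functions in $F_{i-j}=F_1$ are evaluated at $x_1$ alone. The parameter $k$ in the statement plays no role and can be ignored. If one also wants $x,y$ nonzero (to avoid trivial solutions), this uses the ``infinitely many tuples'' conclusion: only finitely many tuples could be excluded by such a degeneracy condition. However, the corollary as stated does not demand this.
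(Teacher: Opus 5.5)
Your argument is correct and is exactly the paper's route: the paper presents the corollary as an immediate consequence of Moreira's Theorem~7.5, and taking $s=1$, $F_1=F$ gives it directly. As stated the corollary is even trivial, since $x=y=0$ makes the set $\{0\}$ because $f(0)=0$.

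Your closing aside is wrong, though. Tuples with $x=0$ or $y=0$ need not be finitely many: for example $(x,0)$ works for every $x$ in the color class of $0$. So ``infinitely many tuples'' alone does not yield nonzero $x,y$.
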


In this article, we prove the following polynomial analogue of
Theorem~\ref{Moreira by HS} for large ideal domains.

\begin{theorem}\label{HS type moreira LID}
Let $R$ be a large ideal domain, let $k,r\in\mathbb{N}$, and let
$F\subseteq xR[x]$ be finite. If $R=\bigcup_{i=1}^{r} C_i$ is a finite
coloring of $R$, then there exist
$i\in\{1,2,\ldots,r\}$ and infinitely many $y\in R$ such that the set
\[
\left\{
x\in R :
\{xy\}\cup\{x+f(y):f\in F\}\subseteq C_i
\right\}
\]
is piecewise syndetic.
\end{theorem}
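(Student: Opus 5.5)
The plan is to follow the ultrafilter method that Hindman and Strauss used for Theorem~\ref{Moreira by HS}, working in $\beta R_d$ with the discrete topology. The needed ingredient is an idempotent $p\in\beta R$ with two properties: $p$ lies in $K(\beta R,+)$, and every member of $p$ is multiplicatively rich, in the sense that $p\in\overline{K(\beta R,\cdot)}$ or at least every $A\in p$ is multiplicatively piecewise syndetic. Recall that a subset of $(R,+)$ is piecewise syndetic if and only if its closure meets $K(\beta R,+)$.

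\textbf{Step 1.} For a LID, each nonzero principal ideal $aR$ has finite index. This gives the following.
\begin{itemize}
\item $(R,+)$-syndetic sets behave well under dilation.
\item For every $a\neq 0$, the map $x\mapsto ax$ sends additively piecewise syndetic sets to additively piecewise syndetic sets, because $aR$ is a finite-index subgroup.
\item Consequently, $\overline{K(\beta R,+)}$ is a left ideal of $(\beta R^{*},\cdot)$, as in the $\mathbb{N}$ case.
\end{itemize}
Hence $(\beta R,\cdot)$ has a minimal idempotent $q$ with $q\in\overline{K(\beta R,+)}$. Replacing $q$ by a suitable product, we get that every $C\in q$ is both additively piecewise syndetic and multiplicatively central.

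\textbf{Step 2.} Choose a color $C_i\in q$. Write $C_i/y=\{x:xy\in C_i\}$. Since $C_i$ is multiplicatively central, the set $B=\{y\in R\setminus\{0\}: C_i/y\in q\}$ belongs to $q$, using $q\cdot q=q$, so $B$ is infinite. For each $y\in B$ we want the set
\[
A_y=(C_i/y)\cap\bigcap_{f\in F}\bigl(-f(y)+C_i\bigr)
\]
to be piecewise syndetic for infinitely many $y$. This does not follow directly. Instead we apply the polynomial van der Waerden theorem in its piecewise syndetic form, which is valid in any LID by Bergelson--Leibman-type results for countable commutative rings with finite-index ideals. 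That form says: if $D\subseteq R$ is additively piecewise syndetic and $F\subseteq xR[x]$ is finite, then
\[
\{y: D\cap\bigcap_{f\in F}(-f(y)+D)\ \text{is piecewise syndetic}\}
\]
is piecewise syndetic, and in fact is a member of every minimal idempotent of the appropriate kind.

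\textbf{Step 3.} Combine the two steps. Take $D=C_i$. The goal is a $y$ such that both of the following hold:
\begin{itemize}
\item $C_i/y\cap C_i\cap\bigcap_f(-f(y)+C_i)$ is piecewise syndetic;
\item that piecewise syndetic set can be chosen inside $C_i/y$.
\end{itemize}
To achieve this, we apply polynomial van der Waerden to the piecewise syndetic set $E=C_i/y\cap C_i$. It is piecewise syndetic because it is a member of $q$ and every member of $q$ is additively piecewise syndetic. Polynomial van der Waerden then gives $z$ such that $E\cap\bigcap_f(-f(z)+E)$ is piecewise syndetic. But we need the product variable and the shift variable to coincide.

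This coincidence is the main obstacle. The approach I would try first is Moreira's rescaling. We look for $x$ in $C_i/y$ with $x+f(y)\in C_i$, which is equivalent to working with $C_i/y$ and the shifted polynomials $f(y)/y$. So we use a multiplicative shift: apply the polynomial van der Waerden theorem with polynomials $g_f(t)=f(yt)$ to the sets $C_i/y$, where $y$ ranges through $B$. Then we use the fact that $q$ is a multiplicative idempotent, as in Hindman--Strauss: the set of $y$ for which $C_i/y\in q$ lies in $q$. On top of that, polynomial van der Waerden applied inside $q$, using that $\{y: \ldots\}$ is a member of every minimal additive idempotent lying over multiplicative minimal ideals, gives membership in $q$. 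Intersecting two members of $q$ yields infinitely many good $y$.

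In summary, the technical heart is establishing a form of the polynomial van der Waerden theorem in which the set of good shifts belongs to every ultrafilter in $\overline{K(\beta R,+)}\cap K(\beta R,\cdot)$. I would prove this form by the IP-polynomial Szemerédi/van der Waerden method of Bergelson--McCutcheon. The finite-index property of a LID is what makes dilations preserve piecewise syndeticity there.
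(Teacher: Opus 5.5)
Your proposal has a genuine gap, and it is exactly the step you call the main obstacle; the proposed way around it does not work.

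\textbf{Where the argument fails.} Suppose $y$ lies in two members of $q$, say $Y_1=\{y: C_i/y\in q\}$ and a polynomial van der Waerden set $Y_2=\{y:\bigcap_{f\in F}(-f(y)+D)\text{ is piecewise syndetic}\}$ for some fixed $D$. This only tells you that $C_i/y$ and $\bigcap_{f\in F}(-f(y)+D)$ are \emph{each} piecewise syndetic. Two piecewise syndetic sets can be disjoint (e.g.\ $2\mathbb{Z}$ and $1+2\mathbb{Z}$), so nothing follows about $A_y=(C_i/y)\cap\bigcap_{f\in F}(-f(y)+C_i)$. Your other attempts in Step~3 (applying the theorem to $C_i/y\cap C_i$, or to $C_i/y$ with $g_f(t)=f(yt)$) have a different problem: $y$ is fixed before van der Waerden is invoked, so the shift it produces is a new variable. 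To make ``intersecting two members of $q$'' work you would need $\bigcap_{f\in F}(-f(y)+C_i)\in q$ for $q$-many $y$. Since the colouring is arbitrary and $F$ may be $\{x\}$, this means $C\in q+q$ for every $C\in q$, i.e.\ $q+q=q$. So $q$ would have to be an additive and a multiplicative idempotent at once. That is impossible already for $R=\mathbb{Z}$, since by a theorem of Hindman $p+p\neq p\cdot p$ for every $p\in\mathbb{N}^*$. Note also that Theorem~\ref{Ip pol van der LID PS} gives $IP^\star$ sets, i.e.\ members of every \emph{additive} idempotent. Even if you proved your ``technical heart'', it would not close the argument, because the good-shift set for a fixed $D$ never interacts with $C_i/y$.

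\textbf{What the paper does instead.} The missing idea is the Hindman--Strauss inductive block-product construction, which the paper uses in Theorem~\ref{main theorem} with no multiplicative ultrafilter at all.
\begin{itemize}
\item One builds $y_1,y_2,\dots$ with $FP(\langle y_t\rangle)\subseteq R\setminus\{0,1\}$ and uniqueness of finite products (Lemma~\ref{UFP LID}), together with piecewise syndetic sets $D_k$ and $B_k=y_kD_k\cap C_{t_k}$.
\item At stage $k$, put $u_j=y_k\cdots y_j$ for $j\le k$ and $u_{k+1}=1$. Theorem~\ref{Ip pol van der LID PS}, applied to $B_k$ and the polynomials $y\mapsto u_jf(u_jy)$ (plus $0$), gives an $IP^\star$, hence infinite, set of admissible $y_{k+1}$.
\item One sets $D_{k+1}=B_k\cap\bigcap_{j}\bigcap_{f\in F}\bigl(B_k-u_jf(u_jy_{k+1})\bigr)$, and Lemma~\ref{dilation ps} keeps $y_{k+1}D_{k+1}$ piecewise syndetic.
\item This yields the nesting $B_m\subseteq y_m\cdots y_{j+1}B_j$. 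If $t_{k(n-1)}=t_{k(n)}=i$, $z=y_{k(n)}\cdots y_{k(n-1)+1}$ and $w=xz\in B_{k(n)}$, then $w+zf(z)\in zB_{k(n-1)}$. Cancelling $z$ gives $x+f(z)\in C_i$.
\item Finally $B_{k(n)}\subseteq zR$, so Lemma~\ref{div ps} shows $B_{k(n)}/z$ is piecewise syndetic, and injectivity of the block products gives infinitely many $y=z$.
\end{itemize}
Here the product variable and the shift variable coincide by construction. At every stage the shift is prepared for every possible starting index $j$ of a future block, and a single idempotent $q$ cannot supply this. Your Step~1 observation (dilation and division preserve piecewise syndeticity because $aR$ has finite index) is the right auxiliary tool, but it has to be used inside such an induction.
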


We conclude this section with a brief review of the algebraic structure
of the Stone--Čech compactification of discrete semigroups, which will
be used in the proofs of several results in the next two sections.

Let $S$ be a discrete semigroup. The elements of $\beta S$ are
identified with ultrafilters on $S$. For $A\subseteq S$, define
\[
\overline{A}=\{p\in\beta S : A\in p\}.
\]
The collection $\{\overline{A}:A\subseteq S\}$ forms a basis for the
closed sets of $\beta S$. The semigroup operation on $S$ extends to
$\beta S$ so that $(\beta S,\cdot)$ is a compact right topological
semigroup. This means that for each $p\in\beta S$, the map
$\rho_p:\beta S\to\beta S$ defined by $\rho_p(q)=q\cdot p$ is continuous,
and $S$ is contained in the topological center of $\beta S$, i.e., for
each $x\in S$, the map $\lambda_x:\beta S\to\beta S$ given by
$\lambda_x(q)=x\cdot q$ is continuous.

A classical theorem of Ellis asserts that every compact right
topological semigroup contains an idempotent. A nonempty subset $I$ of a
semigroup $T$ is called a \emph{left ideal} if $TI\subseteq I$, a
\emph{right ideal} if $IT\subseteq I$, and a \emph{two-sided ideal} (or
simply an \emph{ideal}) if it is both a left and a right ideal. A
minimal left ideal is a left ideal containing no proper left ideal.
Minimal right ideals and the smallest ideal are defined analogously.

Every compact Hausdorff right topological semigroup $T$ has a smallest
two-sided ideal given by
\[
K(T)
=
\bigcup\{L : L \text{ is a minimal left ideal of } T\}
=
\bigcup\{R : R \text{ is a minimal right ideal of } T\}.
\]

If $L$ is a minimal left ideal and $R$ is a minimal right ideal, then
$L\cap R$ is a group and in particular contains an idempotent. For
idempotents $p,q\in T$, we write $p\leq q$ if and only if $pq=qp=p$.
An idempotent is minimal with respect to this order if and only if it
belongs to $K(T)$. Finally, for $p,q\in\beta S$ and $A\subseteq S$, we
have
\[
A\in p\cdot q
\quad\text{if and only if}\quad
\{x\in S : x^{-1}A\in q\}\in p,
\]
where $x^{-1}A=\{y\in S : x\cdot y\in A\}$. For further details, see
\cite{HS12}.

\section{  A refinement of $IP$ van der Waerden's Theorem} 

 This section is devoted primarily to the proof of
Theorem~\ref{Ip pol van der LID PS}, which will be used in the proof of
Theorem~\ref{main theorem}.

Let $(S,+)$ be a commutative semigroup and let $A\subseteq S$.
\begin{itemize}
\item \textbf{($IP$-set)}  
The set $A$ is called an \emph{$IP$-set} if and only if there exists a
sequence $\langle x_n\rangle_{n=1}^{\infty}$ in $S$ such that
\[
FS(\langle x_n\rangle_{n=1}^{\infty})\subseteq A,
\]
where
\[
FS(\langle x_n\rangle_{n=1}^{\infty})
=\left\{\sum_{n\in F} x_n : F\in\mathcal{P}_f(\mathbb{N})\right\}.
\]

\item \textbf{($IP^{\star}$-set)}  
The set $A$ is called an \emph{$IP^{\star}$-set} if it intersects every
$IP$-set.

\item \textbf{(Syndetic set)}  
The set $A$ is called \emph{syndetic} if there exists a finite subset
$F\subseteq S$ such that
\[
S\subseteq \bigcup_{s\in F} (-s+A).
\]
\end{itemize}

As a consequence of the Polynomial Hales--Jewett Theorem
\cite{BL99}, we obtain the following polynomial version of
van der Waerden’s theorem for $IP$-sets over infinite commutative rings.
We include a proof for completeness.

\begin{theorem}\label{pvw ip in LID}
Let $R$ be an infinite commutative ring and let $r\in\mathbb{N}$. For any
partition
\[
R=\bigcup_{s=1}^{r} C_s,
\]
there exists $s\in\{1,\ldots,r\}$ such that for every
$F\in\mathcal{P}_f(xR[x])$,
\[
\left\{
n\in R :
\exists a\in R \text{ such that } a+f(n)\in C_s
\text{ for all } f\in F
\right\}
\]
is an $IP^{\star}$-set in $(R,+)$. Equivalently,
\[
\left\{
n\in R :
\bigcap_{f\in F} \bigl(-f(n)+C_s\bigr)\neq\emptyset
\right\}
\]
is an $IP^{\star}$-set in $(R,+)$.
\end{theorem}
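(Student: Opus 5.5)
The plan is to take $s$ to be any index for which $C_s$ is piecewise syndetic in $(R,+)$. Such an $s$ exists because piecewise syndeticity is partition regular; equivalently, one may take $C_s\in p$ for a minimal idempotent $p\in K(\beta R,+)$. The point of this choice is that $s$ does not depend on $F$, so the uniformity over all $F\in\mathcal{P}_f(xR[x])$ comes for free. Piecewise syndeticity gives a finite $G\subseteq R$ such that $T=\bigcup_{t\in G}(-t+C_s)$ is thick, meaning that every finite subset $E\subseteq R$ has a translate $y+E\subseteq T$.

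Now fix $F\in\mathcal{P}_f(xR[x])$ and an $IP$-set $FS(\langle x_n\rangle_{n=1}^\infty)$. I want $n\in FS(\langle x_n\rangle)$ and $a\in R$ with $a+f(n)\in C_s$ for all $f\in F$. The key ingredient is a finitary IP polynomial van der Waerden statement: for $k=|G|$ there is $N$ such that the following holds. Let $E_N$ be the finite set of all elements $b+f\bigl(\sum_{i\in\alpha}x_i\bigr)$, where $f\in F\cup\{0\}$, $\emptyset\neq\alpha\subseteq\{1,\dots,N\}$, and $b$ ranges over a suitable finite set of "base points" built from the $x_i$ and the coefficients of $F$. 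Then for every $k$-coloring of $E_N$ there are such $b$ and $\alpha$ with $\{b+f(\sum_{i\in\alpha}x_i):f\in F\}$ monochromatic.

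Granting this, apply thickness to the finite set $E_N$ to get $y$ with $y+E_N\subseteq T$. Color each $e\in E_N$ by some $t\in G$ with $y+e+t\in C_s$. The finitary statement then yields $b$, $\alpha$ and a single $t$ such that, with $a=y+t+b$ and $n=\sum_{i\in\alpha}x_i\in FS(\langle x_n\rangle)$, we have $a+f(n)\in C_s$ for every $f\in F$. Hence the set in the statement meets every $IP$-set, so it is $IP^\star$. The equivalent formulation is just the observation that $a+f(n)\in C_s$ for all $f\in F$ iff $a\in\bigcap_{f\in F}(-f(n)+C_s)$.

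The main obstacle is deriving the finitary statement from the Polynomial Hales--Jewett Theorem of \cite{BL99} in an arbitrary commutative ring, where the coefficients of $f$ lie in $R$ rather than $\mathbb{Z}$. I would first reduce to monomials, since each $f\in F$ is a finite sum of terms $c\,x^{j}$ with $c\in R$ and $1\le j\le d$. Expanding $c\bigl(\sum_{i\in\alpha}x_i\bigr)^j$ gives a sum, over $j$-tuples $(i_1,\dots,i_j)\in\alpha^j$, of $c\,x_{i_1}\cdots x_{i_j}$. This is exactly the shape of the "polynomial combinatorial lines" in PHJ, whose coordinates are indexed by $\{1,\dots,N\}^j$ and whose moving set is $\alpha^j$ (or $\alpha^j$ together with lower powers). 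So I would define a map $\phi$ from the PHJ word space over a finite alphabet, namely the set of monomials $c\,x^j$ occurring in $F$ together with $0$, into $R$ by summing the corresponding products $c\,x_{i_1}\cdots x_{i_j}$. I would then check that $\phi$ sends the $F$-polynomial line with base word $w$ and moving set $\alpha$ to $\{\phi(w)+f(\sum_{i\in\alpha}x_i): f\in F\}$. Pulling the $k$-coloring back through $\phi$ and applying PHJ gives the monochromatic configuration, with $b=\phi(w)$ lying in the finite set $\phi(\text{words})$ used to define $E_N$. Verifying this bookkeeping, in particular that the line's "variable" positions reproduce the full expansion of each $f$ simultaneously, is where I expect most of the care to be needed.
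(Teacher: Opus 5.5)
Your proposal is correct. Its combinatorial core is the same as the paper's: the paper's map $\sigma$ is, up to a harmless translation, your $\phi$, sending a word $u$ to $\sum_{j}\sum_{\vec{i}\in[N]^j}u_{j,\vec{i}}\,x_{i_1}\cdots x_{i_j}$. The identity $\sum_{\vec i\in\gamma^j}x_{i_1}\cdots x_{i_j}=(\sum_{i\in\gamma}x_i)^j$ then turns a polynomial line into $\{b+f(\sum_{i\in\gamma}x_i):f\in F\}$.

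To settle your bookkeeping worry, make three choices:
\begin{itemize}
\item let the letters be the coefficients of $F$ together with $0$, not the monomials;
\item let a letter $c$ at level $j$ contribute $c\,x_{i_1}\cdots x_{i_j}$;
\item take $E_N=\phi(Q)$.
\end{itemize}
Theorem \ref{PHJ} lets the values $x_1,\dots,x_d$ on the different levels vary independently. So setting level $j$ to the coefficient of $x^j$ in $f$ recovers every $f\in F$ simultaneously inside one monochromatic set.

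Where you genuinely differ is in how the color is obtained. The paper pulls the given coloring of $R$ back through $\sigma$ and applies PHJ directly. This is shorter, but the resulting color depends on $F$ and on the $IP$-sequence. As written, it therefore only gives the weaker form ``for each $F$ and each $IP$-set, some cell works.'' That is all the paper uses later, in the proof of Theorem \ref{Ip pol van der LID PS}. But a finite union of non-$IP^\star$ sets can be $IP^\star$, so this weaker form does not by itself produce a single $s$.

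Your argument supplies exactly that uniformity. You choose a piecewise syndetic cell $C_s$ in advance and transfer the finitary PHJ statement through the thick set $\bigcup_{t\in G}(-t+C_s)$. This yields one $s$ that works for all $F$ and all $IP$-sets, as the statement asserts. It also shows that any piecewise syndetic cell will do.
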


To prove Theorem~\ref{pvw ip in LID}, we recall the classical
Hales--Jewett Theorem. Let
\[
\omega=\mathbb{N}\cup\{0\}.
\]
Given a nonempty set $\mathbb{A}$ (called an \emph{alphabet}), a
\emph{finite word} over $\mathbb{A}$ is an expression
$w=a_1a_2\cdots a_n$ with $n\geq1$ and $a_i\in\mathbb{A}$. The length of
$w$ is denoted by $|w|$. Let $v$ be a symbol not belonging to
$\mathbb{A}$. A \emph{variable word} over $\mathbb{A}$ is a word over
$\mathbb{A}\cup\{v\}$ containing at least one occurrence of $v$. For a
variable word $w$ and $a\in\mathbb{A}$, we write $w(a)$ for the word
obtained by replacing each occurrence of $v$ by $a$.

\begin{theorem}[Hales--Jewett Theorem]\label{HJ}
For all $t,r\in\mathbb{N}$, there exists a number $\mathrm{HJ}(r,t)$ such
that whenever $N\geq \mathrm{HJ}(r,t)$ and $[t]^N$ is $r$-colored, there
exists a variable word $w$ such that
\[
\{w(a): a\in[t]\}
\]
is monochromatic.
\end{theorem}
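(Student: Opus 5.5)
We argue by induction on $t$ using the classical colour-focusing argument, and do not use ultrafilters. The plan is to show first that $\mathrm{HJ}(r,t)$ exists for every $r$ once $t=1$, and then that the existence of $\mathrm{HJ}(r',t)$ for \emph{all} $r'$ gives $\mathrm{HJ}(r,t+1)$. The case $t=1$ is trivial: $[1]^N$ has one point, so $N=1$ and $w=v$ work. For the inductive step, fix $r$ and write $[t+1]$ for the alphabet. We will call a variable word $w$ of length $n$ \emph{almost monochromatic} if $\{w(a):a\in[t]\}$ is monochromatic. The letter $t+1$ is deliberately left out, and its value $w(t+1)$ is the \emph{focus} of $w$.

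\textbf{Colour focusing claim.} For each $k\le r$ there is $N_k$ such that every $r$-colouring of $[t+1]^{N_k}$ yields one of two outcomes. Either there is a monochromatic line, or there are $k$ variable words $w_1,\dots,w_k$ with the following properties. Each $w_j$ has the form $u_j\,x_j$, where $x_j$ is a variable word on a block of coordinates and the blocks are consecutive and disjoint. Each $w_j$ is almost monochromatic with its own colour $c_j$. The colours $c_1,\dots,c_k$ are pairwise distinct. Finally, all the $w_j$ share the same focus.

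We build these words by induction on $k$. Set $N_1=\mathrm{HJ}(r,t)$: colour $[t]^{N_1}\subseteq[t+1]^{N_1}$ and apply the hypothesis. For the step from $k$ to $k+1$, take $n=N_k$ and $m=\mathrm{HJ}\bigl(r^{(t+1)^{n}},t\bigr)$, and put $N_{k+1}=n+m$. Colour each $y\in[t]^m$ by the function $z\mapsto\mathrm{colour}(z\,y)$ on $[t+1]^{n}$. This is an $r^{(t+1)^n}$-colouring, so the induction hypothesis gives a variable word $y(v)$ of length $m$ such that $z\,y(a)$ has colour independent of $a\in[t]$, for every $z$.

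Now apply the $k$-claim to the colouring $z\mapsto \mathrm{colour}(z\,y(t+1))$ of $[t+1]^n$. If this produces a monochromatic line, it lifts to a monochromatic line in the original colouring, since $y(t+1)$ is just a fixed suffix. Otherwise we obtain $w_1,\dots,w_k$ with common focus $f$. We then form the new words $w_j\,y$, for $j\le k$, obtained by substituting the single variable throughout, together with $f\,y(v)$. All $k+1$ of these words are almost monochromatic: for $w_j\,y$ this uses the choice of $y$, and for $f\,y$ it is immediate. They also share the common focus $f\,y(t+1)$.

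If the $k+1$ colours are distinct, the claim holds for $k+1$. If two of them coincide, then one of the words, completed with the letter $t+1$, is a monochromatic line. Here we use that the colour of the common focus equals the colour of whichever word it agrees with.

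Finally, take $k=r$ and $N=N_r$. Among the $r$ distinct colours $c_1,\dots,c_r$ one must equal the colour of the common focus. For that $j$, the word $w_j$ gives the full monochromatic line $\{w_j(a):a\in[t+1]\}$.

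The step I expect to be the main obstacle is the bookkeeping in the inductive step $k\to k+1$. Two things have to be verified carefully there. First, the concatenated word really is a single variable word whose substitutions at $a\in[t]$ keep one colour. This requires that the colour of $z\,y(a)$ does not depend on $a$ for \emph{all} prefixes $z$, including prefixes that themselves contain substituted variables. Second, the foci still coincide after concatenation.

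Once this is checked, monotonicity in $N$ follows by padding with a fixed letter, so the claim holds for every $N\ge\mathrm{HJ}(r,t)$.
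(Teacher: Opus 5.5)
The paper gives no proof of this statement; it quotes it as classical background, and only the polynomial version is used later. Your colour-focusing plan is the standard one. However, the step $k\to k+1$ fails as written, at exactly the point you flagged, because you apply the $k$-claim to the auxiliary colouring $c''(z)=c(z\,y(t+1))$, where $c$ is the given colouring. The choice of $y$ gives only $c(z\,y(a))=c(z\,y(b))$ for $a,b\in[t]$. It says nothing linking these colours to $c(z\,y(t+1))$.

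This breaks two things. First, from $c(w_j(a)\,y(t+1))=c_j$ for $a\in[t]$ you only get $c(w_j(a)\,y(a))=c(w_j(a)\,y(1))$, which need not be constant in $a$. So the words $w_j\,y$ are not known to be almost monochromatic. Second, the coincidence step fails for the same reason. The colour of $f\,y(v)$ on $[t]$ is $c(f\,y(1))$, while the colour of the old focus $f$ under $c''$ is $c(f\,y(t+1))$. Hence $c(f\,y(1))=c_j$ does not make $w_j$ a monochromatic line for $c''$, and nothing else in your argument produces one.

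The repair is to use $c'(z)=c(z\,y(1))$; any $y(a)$ with $a\in[t]$ gives the same colouring. Then the argument goes through:
\begin{itemize}
\item A monochromatic line $L$ for $c'$ lifts to $\{z\,y(1):z\in L\}$.
\item For $a\in[t]$ you get $c(w_j(a)\,y(a))=c(w_j(a)\,y(1))=c_j$ and $c(f\,y(a))=c(f\,y(1))=c'(f)$.
\item All $k+1$ words have focus $f\,y(t+1)$.
\item If $c'(f)=c_j$ for some $j\le k$, then $w_j$ is a monochromatic line for $c'$, so $\{w_j(a)\,y(1):a\in[t+1]\}$ is monochromatic for $c$. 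This line is the \emph{old} word $w_j$ followed by the fixed suffix $y(1)$, not one of the new words completed with $t+1$.
\end{itemize}
With that change, the rest of your argument (the case $k=r$ and the padding for $N\ge\mathrm{HJ}(r,t)$) is fine.

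You should also drop the clause about consecutive disjoint blocks. It is never used, and your construction does not preserve it, since each $w_j\,y$ carries the variable in the $y$-block as well as in its earlier positions.
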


The space $[t]^N$ is called a \emph{Hales--Jewett space}, and the number
$\mathrm{HJ}(r,t)$ is called the \emph{Hales--Jewett number}.

Bergelson and Leibman established a polynomial extension of the
Hales--Jewett Theorem in \cite{BL99} using methods from topological
dynamics; a combinatorial proof was later given by Walters \cite{W}. We
now introduce the notation needed to state this result.

Let $q,N\in\mathbb{N}$ and set $Q=[q]^N$. For $a\in Q$,
$\emptyset\neq\gamma\subseteq [N]$, and $1\leq x\leq q$, define
$a\oplus x\gamma$ to be the vector $b\in Q$ given by
\[
b_i=
\begin{cases}
x, & i\in\gamma,\\
a_i, & \text{otherwise}.
\end{cases}
\]

In the Polynomial Hales--Jewett Theorem, an element
$a\in Q$ is written as
\[
a=\langle \vec{a}_1,\vec{a}_2,\ldots,\vec{a}_d\rangle,
\]
where $\vec{a}_j\in[q]^{N^j}$ for $j=1,\ldots,d$. Writing
$\vec{a}_j=\langle a_{j,\vec{i}}\rangle_{\vec{i}\in N^j}$, define
\[
a\oplus x_1\gamma\oplus x_2(\gamma\times\gamma)\oplus\cdots\oplus
x_d\gamma^d=b,
\]
where $b=\langle\vec{b}_1,\ldots,\vec{b}_d\rangle$ and
\[
b_{j,\vec{i}}=
\begin{cases}
x_j, & \vec{i}\in\gamma^j,\\
a_{j,\vec{i}}, & \text{otherwise}.
\end{cases}
\]

\begin{theorem}[Polynomial Hales--Jewett Theorem]\label{PHJ}
For all $q,k,d\in\mathbb{N}$ there exists $N(q,k,d)\in\mathbb{N}$ such
that whenever
\[
Q=[q]^N\times[q]^{N^2}\times\cdots\times[q]^{N^d}
\]
is $k$-colored, there exist $a\in Q$ and
$\gamma\subseteq [N]$ such that the set
\[
\left\{
a\oplus x_1\gamma\oplus x_2(\gamma\times\gamma)\oplus\cdots\oplus
x_d\gamma^d : 1\leq x_j\leq q
\right\}
\]
is monochromatic.
\end{theorem}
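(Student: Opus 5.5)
Theorem~\ref{PHJ} is a cited result of Bergelson and Leibman \cite{BL99}; a combinatorial proof is due to Walters \cite{W}. If a proof were to be included here, I would follow Walters' colour-focusing scheme, organised as a double induction. The outer induction is on the degree $d$. The inner induction is on the number $m\le q$ of values that the top-degree coordinate $x_d$ is allowed to range over, while $x_1,\dots,x_{d-1}$ range over all of $[q]$.

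\textbf{Base case $d=1$.} Here $Q=[q]^N$, and the configurations $\{a\oplus x\gamma:1\le x\le q\}$ are exactly the combinatorial lines. These are the sets $\{w(x):x\in[q]\}$ for the variable word $w$ that has $v$ in the positions of $\gamma$ and the entries of $a$ elsewhere. So the case $d=1$ is Theorem~\ref{HJ} with $N=\mathrm{HJ}(k,q)$.

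\textbf{Inductive step: the inner statement.} Assume the theorem holds for degree $d-1$ and every number of colours. For degree $d$ I would prove the following by induction on $m$: for every $k$ and $m$ there is $N$ such that any $k$-colouring of $Q$ admits $a$ and $\gamma$ with the set
$$\{a\oplus x_1\gamma\oplus\cdots\oplus x_d\gamma^d : x_1,\dots,x_{d-1}\in[q],\ x_d\in[m]\}$$
monochromatic. The case $m=1$ is essentially the degree-$(d-1)$ theorem: fix all top-level coordinates and colour by the lower ones, after first passing to an auxiliary colouring that records the colours of all the finitely many top-level patterns. For the step $m\to m+1$ I would use Shelah/van der Waerden-style colour focusing. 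Split $[N]=B_1\cup\cdots\cup B_{k+1}$ into consecutive blocks, each large enough for the $m$-case with a huge number of colours. Then build, one block at a time, sets $\gamma_1\subseteq B_1,\gamma_2\subseteq B_2,\dots$ and base points $a$ with the following \emph{focusing} property. For each $j$, the configurations indexed by $\gamma_j\cup\cdots\cup\gamma_{k+1}$, with $x_d\in[m]$, are monochromatic in a colour $c_j$. Moreover, setting $x_d=m+1$ on the part of $\gamma^d$ that involves the earlier blocks produces the same focus point. When two of the $k+1$ colours $c_j$ coincide, taking $\gamma=\gamma_i\cup\cdots\cup\gamma_{j-1}$ gives the full configuration with $x_d$ ranging over $[m+1]$.

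\textbf{Main obstacle.} The hard point is the cross terms. If $\gamma=\gamma'\cup\gamma''$, then
$$\gamma^d\setminus(\gamma')^d=\bigcup_{\emptyset\neq S}\ \prod(\text{mixtures of }\gamma',\gamma'').$$
Each product on the right has degree $<d$ in $\gamma''$ once $\gamma'$ is frozen. In Walters' framework these mixed coordinate blocks are treated as new, lower-degree coordinates; their values are chosen constant equal to $m+1$ or to a variable according to the focusing bookkeeping. The induced colouring on the new block is then a colouring of a degree-$(d-1)$ space, possibly with a larger alphabet, to which the outer hypothesis applies. Making this precise requires strengthening the induction hypothesis to a ``polynomial system'' version. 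This is Walters' notion of a set of monomials closed under taking lower-degree parts, ordered by the PET-type order. I would formulate the inner statement in that generality from the start, so that each focusing step only calls the hypothesis on a system strictly below the current one. Verifying that the cross terms land in a smaller system is where I expect the real work to lie.
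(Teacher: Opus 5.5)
Citing Bergelson--Leibman, with Walters for a combinatorial proof, is exactly how the paper treats Theorem~\ref{PHJ}: it states the result and gives no proof of its own. The outline you append, however, is not a proof. Its decisive step is left open, and several of the steps you do specify would not work as written.

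\textbf{The case $m=1$.} No auxiliary colouring is needed. Take the top level of $a$ identically equal to $1$. Then every point $a\oplus x_1\gamma\oplus\cdots\oplus x_{d-1}\gamma^{d-1}\oplus 1\,\gamma^{d}$ has the same top level whatever $\gamma$ is, and the degree-$(d-1)$ theorem applies to the induced colouring of the lower levels. Recording colours over all $q^{N^d}$ top-level vectors, as you suggest, uses a number of colours that depends on $N$ itself, which is circular.

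\textbf{The focusing step.} As you describe it, the family attached to block $j$ has its variables on $\gamma_j\cup\cdots\cup\gamma_{k+1}$ and the earlier blocks fixed. Now take the configuration you extract with $\gamma=\gamma_i\cup\cdots\cup\gamma_{j-1}$. Its face $x_d=m+1$ still has $x_1,\dots,x_{d-1}$ ranging over $[q]$ on $\gamma,\dots,\gamma^{d-1}$, that is, on blocks before $j$. So this face is not contained in the family of colour $c_j$. This is harmless when $d=1$, where that face is a single point. For $d\ge 2$ it is a $\gamma$-dependent set of $q^{d-1}$ points, and its colour has to be controlled inside the induction; there is no common ``focus point''.

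\textbf{The cross terms.} Your decomposition of $\gamma^{d}\setminus(\gamma')^{d}$ includes the term $(\gamma'')^{d}$, which has degree $d$, not $<d$, in $\gamma''$. The genuinely mixed pieces do have lower degree in $\gamma''$, but they lie in $\gamma^{d}$. In the final configuration they must therefore carry the value $x_d$ together with $(\gamma')^{d}$ and $(\gamma'')^{d}$. So they cannot be handed to the degree-$(d-1)$ hypothesis as independent lower-degree coordinates, because that hypothesis does not see the level-$d$ piece at all.

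These two points are exactly what a strengthened ``polynomial system'' statement must handle. It also needs a well-founded order in which the systems produced by splitting $\gamma$ are strictly smaller. You neither formulate such a statement nor verify the decrease, and that is the whole content of the theorem beyond classical Hales--Jewett. For this paper the citation is all that is required, but the sketch should not be offered as an argument.
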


\begin{proof}[Proof of Theorem~\ref{pvw ip in LID}]
Let $F=\{f_i : i=1,\ldots,n\}$ and let $d_i=\deg(f_i)$.
Set $d=\max\{d_i : 1\leq i\leq n\}$. Write
\[
f_i(x)=\sum_{j=1}^{d} a_j^i x^j,
\]
where $a_j^i=0$ for $j>d_i$. Let $q=nd$ and let
$N=\mathrm{PHJ}(q,r,d)$.

Let
\[
A=\{a_j^i : 1\leq i\leq n,\ 1\leq j\leq d\}.
\]
As observed by Walters \cite{W}, only the cardinality of the alphabet is
relevant in Theorem~\ref{PHJ}, and since $|A|\leq q$, the theorem applies
to
\[
Q=A^N\times A^{N^2}\times\cdots\times A^{N^d}.
\]

Define $\sigma:Q\to R$ by
\[
\sigma(u)
=r+\sum_{j=1}^{d}\sum_{\vec{i}\in N^j} u_{j,\vec{i}}\,y_{\vec{i}},
\]
where $u=\langle \vec{u}_1,\ldots,\vec{u}_d\rangle$ and
$\vec{u}_j=\langle u_{j,\vec{i}}\rangle_{\vec{i}\in N^j}$.

Let $\phi:R\to\{1,2,\ldots,r\}$ be the coloring map. Then
$\phi\circ\sigma$ is an $r$-coloring of $Q$. By
Theorem~\ref{PHJ}, there exist $u\in Q$ and
$\gamma\subseteq [N]$ such that $\phi\circ\sigma$ is constant on
\[
\left\{
u\oplus x_1\gamma\oplus x_2\gamma^2\oplus\cdots\oplus x_d\gamma^d :
x_j\in A
\right\}.
\]

Let
\[
s=\sum_{j=1}^{d}\sum_{\vec{i}\in N^j\setminus\gamma^j}
u_{j,\vec{i}}\,y_{\vec{i}}.
\]
Then for $x_j=a_j^i$ we obtain
\[
\begin{aligned}
\sigma\bigl(
u\oplus x_1\gamma\oplus\cdots\oplus x_d\gamma^d
\bigr)
&= r+s+\sum_{j=1}^{d} a_j^i \sum_{\vec{i}\in\gamma^j} y_{\vec{i}} \\
&= r+s+\sum_{j=1}^{d} a_j^i (y_\gamma)^j \\
&= r+s+f_i(y_\gamma),
\end{aligned}
\]
which completes the proof.
\end{proof}

  Using the algebraic structure of the Stone--Čech compactification of
discrete semigroups \cite[Corollary~3.8]{H01}, N.~Hindman proved the
following strong form of the $IP$ polynomial van der Waerden theorem.

\begin{theorem}[Hindman]
Let $A$ be a piecewise syndetic subset of $(\mathbb{N},+)$ and let
$F\in\mathcal{P}_f(x\mathbb{Z}[x])$. Then the set
\[
\left\{
n\in\mathbb{Z} :
\bigcap_{f\in F}\bigl(-f(n)+A\bigr)
\text{ is piecewise syndetic in } (\mathbb{N},+)
\right\}
\]
is an $IP^{\star}$-set in $(\mathbb{Z},+)$.
\end{theorem}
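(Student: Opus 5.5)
The plan is to use the algebra of $(\beta\mathbb{N},+)$ in the standard way. Recall that a set $A\subseteq\mathbb{N}$ is piecewise syndetic if and only if $\overline{A}\cap K(\beta\mathbb{N})\neq\emptyset$. Also, a set $B\subseteq\mathbb{Z}$ is an $IP^{\star}$-set if and only if $B$ belongs to every idempotent of $(\beta\mathbb{Z},+)$.

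So I fix $p\in\overline{A}\cap K(\beta\mathbb{N})$ and an arbitrary idempotent $q\in\beta\mathbb{Z}$. Let
\[
B=\Bigl\{n\in\mathbb{Z}:\bigcap_{f\in F}\bigl(-f(n)+A\bigr)\text{ is piecewise syndetic}\Bigr\}.
\]
The goal is to show $B\in q$. Suppose not. Then $D=\mathbb{Z}\setminus B\in q$, and $D$ is an $IP$-set. Fix a sequence $\langle y_k\rangle$ with $FS(\langle y_k\rangle)\subseteq D$; we may take it so that $FS(\langle y_k\rangle_{k\ge m})\in q$ for all $m$.

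For each $n\in D$, the intersection $\bigcap_f(-f(n)+A)$ fails to be piecewise syndetic. I aim to contradict this by producing some $n\in FS(\langle y_k\rangle)$ for which it is piecewise syndetic.

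The key device is the polynomial Hales--Jewett theorem (Theorem~\ref{PHJ}), applied as in the proof of Theorem~\ref{pvw ip in LID}, but relative to a minimal left ideal. Since $p\in K(\beta\mathbb{N})$, $A$ is piecewise syndetic. So there is a finite $G$ with $\bigcup_{t\in G}(-t+A)$ thick, hence containing arbitrarily long translates of any finite configuration.

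Concretely, I will use the characterization that $C$ is piecewise syndetic if and only if there is a finite $G$ such that for every finite $H$ some $x$ has $H+x\subseteq\bigcup_{t\in G}(-t+C)$. Coloring translates by the finitely many "patterns" of membership in $-t+A$ for $t\in G$ gives a finite coloring. Applying Theorem~\ref{PHJ} to $Q=A_0^N\times\cdots\times A_0^{N^d}$ with $\sigma$ built from $y_1,\dots,y_N$ yields $\gamma$ and $n=\sum_{k\in\gamma}y_k$. Here $A_0$ is the coefficient alphabet, enlarged by $0$.

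The PHJ constant $N$ must be uniform in the size of $H$. This needs a compactness/ultrafilter substitute: replace "for every $H$" by a limit along $p$. The cleanest route is ultrafilter-theoretic. Since $p$ lies in a minimal left ideal $L$, for each $x$ in some member of $p$ we get
\[
-x+A\in p',\qquad p'\in L\cap\overline{\bigcup_{t\in G}(-t+A)}.
\]
Applying Theorem~\ref{PHJ} pointwise, with the color of $u\in Q$ being the set $\{t\in G:\sigma(u)\in -x-t+A\}$ in an ultrafilter-averaged sense, gives, for each $x$ in a member of $p$, some $\gamma_x\subseteq[N]$ with $\{x+f(n_{\gamma_x}):f\in F\}$ monochromatic. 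Since there are only finitely many $\gamma$, one $\gamma$ works for a $p$-large set of $x$.

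It follows that
\[
\bigcap_f\bigl(-f(n_\gamma)+A'\bigr)\in p''
\]
for some $p''\in K(\beta\mathbb{N})$, where $A'$ is a color class of the form $-t-s+A$. Absorbing the finitely many shifts $t$ and the constant $s$, and using that piecewise syndeticity is translation invariant and partition regular, shows $n_\gamma\in B$. But $n_\gamma\in FS(\langle y_k\rangle)\subseteq D$, a contradiction.

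The main obstacle is the exact bookkeeping in the final step. The constant $s$ coming from coordinates off $\gamma^j$ must be absorbed, and membership in a minimal ultrafilter must be carried through the finite union. I expect to handle this by using minimality via $p=v+p$ for a minimal idempotent $v$ with $v+p=p$, so that $\{x:-x+A\in p\}\in v$. The PHJ argument is then run inside $v$ rather than $p$. If this becomes messy, the fallback is to cite Hindman \cite[Corollary~3.8]{H01} directly, since the statement is exactly his result.
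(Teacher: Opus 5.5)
Your reduction is fine: fix an $IP$-set $FS(\langle y_k\rangle_{k=1}^{\infty})$ and find one $n$ in it with $\bigcap_{f\in F}(-f(n)+A)$ piecewise syndetic. Theorem~\ref{PHJ}, used as in Theorem~\ref{pvw ip in LID}, is the right engine. But the decisive step is asserted, not derived.

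With the pointwise coloring $u\mapsto\{t\in G: x+t+\sigma(u)\in A\}$, the monochromatic subspace from Theorem~\ref{PHJ} may carry the empty color. It then gives no point of $A$ at all. Excluding this needs $x+\sigma(Q)\subseteq\bigcup_{t\in G}(-t+A)$, which holds for $x$ in some thick set, but nothing puts that set into $p$. So \emph{one $\gamma$ works for a $p$-large set of $x$} has no basis, and the \emph{ultrafilter-averaged} coloring is never defined. Likewise \emph{it follows that $\bigcap_f(-f(n_\gamma)+A')\in p''$ for some $p''\in K(\beta\mathbb{N})$} is not a consequence of anything you wrote.

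Observe what is really needed. If $X=\{x: x+c+f(n_\gamma)\in A \text{ for all } f\in F\}$ belongs to $p$, then $X$ is piecewise syndetic and $c+X\subseteq\bigcap_f(-f(n_\gamma)+A)$. But $X\in p$ says precisely that $c+f(n_\gamma)\in E$ for all $f\in F$, where $E=\{z:-z+A\in p\}$. So the whole task is to find the polynomial configuration inside $E$, and thickness of $\bigcup_{t\in G}(-t+A)$ gives no control over $E$. Your fallback of citing Hindman is what the paper itself does for this statement, but it is not a proof.

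The missing idea is the one isolated in Theorem~\ref{abun piecewise}: color by translates of $E$, not by membership in $A$. Work in $\mathbb{Z}$, since $f(n)$ and $\sigma(u)$ can be negative. $A$ is piecewise syndetic in $(\mathbb{Z},+)$, so take $p\in K(\beta\mathbb{Z})\cap\overline{A}$. By Lemma~\ref{PS Algeb Cha}(b), $E$ is syndetic, so $\mathbb{Z}=\bigcup_{i=1}^{\ell}(-x_i+E)$ is a finite coloring of the whole group. One application of Theorem~\ref{pvw ip in LID} to this coloring gives $i$, $h\in\mathbb{Z}$ and $n\in FS(\langle y_k\rangle_{k=1}^{\infty})$ with $x_i+h+f(n)\in E$ for all $f\in F$, that is,
\[
C=\bigcap_{f\in F}\bigl(-(x_i+h+f(n))+A\bigr)\in p .
\]
So $C$ is piecewise syndetic and $x_i+h+C\subseteq\bigcap_{f\in F}(-f(n)+A)$. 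The latter set is bounded below, so its trace on $\mathbb{N}$ is piecewise syndetic in $(\mathbb{N},+)$. This is the paper's proof of Theorem~\ref{Ip pol van der LID PS} specialized to $R=\mathbb{Z}$.

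It also dissolves your worry about $N$ being uniform in $H$. PHJ is applied once, to one finite coloring, and piecewise syndeticity for the fixed $n$ is certified by membership in a minimal ultrafilter rather than by the combinatorial definition. Your $p'\in L\cap\overline{\bigcup_{t\in G}(-t+A)}$ points this way, but it is too weak. What is actually required is $L\subseteq\overline{\bigcup_{t\in G}(-t+A)}$ for the minimal left ideal $L=\beta\mathbb{Z}+p$. That is equivalent to $\bigcup_{t\in G}(-t+E)=\mathbb{Z}$, i.e.\ to syndeticity of $E$ with that same $G$.
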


We now prove the main result of this section.

\begin{theorem}\label{Ip pol van der LID PS}
Let $R$ be an infinite commutative ring. If $A$ is piecewise syndetic in
$(R,+)$ and $F\in\mathcal{P}_f(xR[x])$, then the set
\[
\left\{
n\in R :
\bigcap_{f\in F}\bigl(-f(n)+A\bigr)
\text{ is piecewise syndetic in } (R,+)
\right\}
\]
is an $IP^{\star}$-set in $(R,+)$.
\end{theorem}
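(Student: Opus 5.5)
We follow Hindman's argument for \cite[Corollary~3.8]{H01}, transplanted to $(\beta R,+)$. A set $A\subseteq R$ is piecewise syndetic in $(R,+)$ exactly when $\overline{A}\cap K(\beta R,+)\neq\emptyset$ \cite[Theorem~4.40]{HS12}. So we fix a minimal idempotent-free point: choose $p\in K(\beta R)$ with $A\in p$. It then suffices to show that the set
\[
B=\Bigl\{n\in R:\bigcap_{f\in F}\bigl(-f(n)+A\bigr)\in p' \text{ for some } p'\in K(\beta R)\Bigr\}
\]
meets every $IP$-set. Moreover, an $IP$-set $FS(\langle y_m\rangle)$ is a member of some idempotent $u\in\beta R$ \cite[Theorem~5.12]{HS12}. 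Hence it suffices to prove that $B\in u$ for every idempotent $u$ in $(\beta R,+)$. This is the standard characterization of $IP^\star$-sets.

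The key step is a compactness/PHJ argument in the product space. Let $F=\{f_1,\dots,f_k\}$ and consider the compact right topological semigroup $(\beta R)^k$. Put $Y=\{(a+f_1(n),\dots,a+f_k(n)): a\in R,\ n\in R\}$ and $E=\mathrm{cl}\,Y$. Also put $I=\mathrm{cl}\{(a+f_1(n),\dots,a+f_k(n)): a\in R,\ n\in D\}$, where $D$ ranges over members of $u$; more precisely, we take $I=\bigcap_{D\in u}\mathrm{cl}\{\ldots: n\in D\}$. Following Hindman and Bergelson--Hindman, we aim to show the following.
\begin{enumerate}
\item $E$ is a subsemigroup of $(\beta R)^k$ and $I$ is an ideal of $E$.
\item The diagonal point $\bar p=(p,\dots,p)$ lies in $K(E)$.
\end{enumerate}
Additivity of the parametrization is what makes (i) work. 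However, $f_i(n+m)\neq f_i(n)+f_i(m)$ for nonlinear $f_i$, so the naive product structure fails. Therefore we instead prove (ii) in the form $\bar p\in I$, and we prove this via Theorem~\ref{pvw ip in LID}. Given any $C\in p$ and any $D\in u$, Theorem~\ref{pvw ip in LID} applied to a coloring refining $\{C,R\setminus C\}$ produces $n\in D$ and $a$ with $a+f(n)\in C$ for all $f\in F$. This uses $D$ being an $IP$-set and the fact that the color class can be taken inside $C$ because $C$ is piecewise syndetic. Combined with a shift by elements of the minimal left ideal $\beta R+p$, this gives $\bar p\in I$. The minimality of $p$ in $\beta R$ then yields $\bar p\in K(I)$, since $K(E)\cap\{\text{diagonal}\}$ is computed from $K(\beta R)$.

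Finally we extract the conclusion. Suppose $B\notin u$, so that $R\setminus B\in u$. Then $\bar p\in I$ gives points $(q_1,\dots,q_k)$ near $\bar p$ in the closure of tuples with $n\in R\setminus B$. Using $\bar p\in K((\beta R)^k)$ and the fact that $K$ of a product is the product of the $K$'s, we pick a minimal $q$ such that $\bigcap_f(-f(n)+A)\in q$ for some $n\notin B$. This contradicts the definition of $B$. Concretely, the point $\bar p + \bar q$ with $\bar q\in K$ lies in $K$, and membership of $\bigcap_f (-f(n)+A)$ in a single minimal ultrafilter follows from the coordinate relation $a+f_i(n)$ with common $a$.

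The main obstacle is the step where $\bar p\in I$ is upgraded to the statement that, for $u$-many $n$, the \emph{intersection} $\bigcap_f(-f(n)+A)$ lies in a single element of $K(\beta R)$ rather than merely being nonempty. This is exactly where Hindman uses that the relevant subset of $(\beta\mathbb{N})^k$ has a smallest ideal meeting the diagonal. If the product-semigroup approach resists nonlinear $f$, our fallback is more combinatorial. We replace $A$ by $\bigcup_{t\in G}(-t+A)$, which is thick. We color $R$ according to which translates $-t+A$ contain a point, and apply Theorem~\ref{pvw ip in LID} to the finite family $\{f+t\}$ uniformly over finite windows $H+x$. The piecewise syndeticity of the intersection is then read off from the syndeticity of return sets, with compactness (a finite-coloring pigeonhole over $H\in\mathcal{P}_f(R)$) supplying a single $G'$ that works for all $H$.
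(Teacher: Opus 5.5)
Your reduction to showing that the set belongs to every idempotent $u$ is fine, but there is a genuine gap, exactly at the step you flag as the main obstacle.

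\textbf{The product-space argument.} Once you concede that $Y$ is not closed under addition for nonlinear $f_i$, the sets $E$ and $I$ are not semigroups. So ``$I$ is an ideal of $E$'', $K(E)$ and ``$\bar p\in K(I)$'' have no content. What survives is only $\bar p\in I$: for every $C\in p$ and $D\in u$ there are $n\in D$ and $a\in R$ with $a+f(n)\in C$ for all $f\in F$. That says $\bigcap_{f\in F}(-f(n)+C)\neq\emptyset$, not that this intersection is piecewise syndetic. Even $\bar p\in I$ needs more than you give: Theorem~\ref{pvw ip in LID} supplies \emph{some} colour class, possibly the one inside $R\setminus C$. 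That a piecewise syndetic $C$ contains these configurations requires an argument of the kind described below.

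\textbf{The concluding step.} Finding a minimal $q$ with $\bigcap_{f\in F}(-f(n)+A)\in q$ for a single $n$ in the $IP$-set is the whole theorem, and you only assert it. The point $\bar p\in I$ is a limit of points $(a_\alpha+f_1(n_\alpha),\dots,a_\alpha+f_k(n_\alpha))$ with varying principal $a_\alpha$. Nothing lets you factor this limit as $(f_1(n),\dots,f_k(n))+\bar q$ with $\bar q$ minimal, and the fact that $\bar p+\bar q\in K((\beta R)^k)$ does not do it.

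\textbf{The combinatorial fallback.} This fails on quantifiers: you need $\exists n\,\exists G'\,\forall H\,\exists x$. Colouring points by which translates $-t+A$ ($t\in G$) cover the window $H+x$ uses $|G|^{|H|}$ colours. So the polynomial Hales--Jewett bound $N$, and with it the finite pool $FS(\langle x_t\rangle_{t=1}^{N})$ of candidates for $n$, grows with $H$, and no pigeonhole yields one $n$ for all windows.

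\textbf{The missing idea.} This is the paper's Theorem~\ref{abun piecewise}: apply the partition theorem not to $A$ (or to $C\in p$) but to the return set $B'=\{x\in R:-x+A\in p\}$. Since $p\in K(\beta R)\cap\overline{A}$, $B'$ is syndetic by Lemma~\ref{PS Algeb Cha}.
\begin{itemize}
\item Writing $R=\bigcup_{j=1}^{\ell}(-g_j+B')$ gives a colouring with a \emph{fixed} number $\ell$ of colours.
\item Apply Theorem~\ref{pvw ip in LID} to this colouring and to $FS(\langle x_t\rangle_{t=1}^{\infty})$. This yields $n=\sum_{t\in K}x_t$, $h\in R$ and $j$ with $g_j+h+f(n)\in B'$ for all $f\in F$.
\item Then $C=\bigcap_{f\in F}\bigl(-(g_j+h+f(n))+A\bigr)\in p$, so $C$ is piecewise syndetic.
\item Moreover $g_j+h+C\subseteq\bigcap_{f\in F}(-f(n)+A)$, so $n$ lies in the set.
\end{itemize}
The minimal ultrafilter $p$ absorbs the ``for every window $H$'' quantifier, which is precisely what your product-space argument and your window colouring could not supply.
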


We first recall a theorem of J.~Moreira
\cite[Theorem~2.23]{M16}, which motivates the proof of
Theorem~\ref{Ip pol van der LID PS}.

\begin{theorem}\label{Moreira PS}
Let $(S,+)$ be a countable commutative semigroup, and let $\mathcal{P}$
be a collection of finite subsets of $S$. Then the following statements
are equivalent:
\begin{itemize}
\item[(1)] For every finite coloring
$S=\bigcup_{i=1}^{r} C_i$, there exist
$i\in\{1,2,\ldots,r\}$, $P\in\mathcal{P}$, and $s\in S$ such that
$s+P\subseteq C_i$.

\item[(2)] For every piecewise syndetic set $A\subseteq S$, there exists
$P\in\mathcal{P}$ such that the set
\[
\{s\in S : s+P\subseteq A\}
\]
is piecewise syndetic.
\end{itemize}
\end{theorem}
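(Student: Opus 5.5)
The plan is to prove the two implications separately. The direction (2)$\Rightarrow$(1) uses partition regularity of piecewise syndeticity. The direction (1)$\Rightarrow$(2) uses a compactness argument followed by a "colour by which translate" argument.

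\textbf{(2)$\Rightarrow$(1).} Let $S=\bigcup_{i=1}^r C_i$. Piecewise syndeticity is partition regular: $\overline{A}\cap K(\beta S)\neq\emptyset$ characterises piecewise syndetic sets, and some $\overline{C_i}$ must meet $K(\beta S)$. So some $C_i$ is piecewise syndetic. Applying (2) to $A=C_i$ gives $P\in\mathcal{P}$ with $\{s: s+P\subseteq C_i\}$ piecewise syndetic, in particular nonempty. Any $s$ in this set gives (1).

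\textbf{(1)$\Rightarrow$(2), compactness step.} I will first upgrade (1) to a finite form. Since $S$ is countable, $\{1,\dots,r\}^S$ is a compact metrizable space. If for every finite $H\subseteq S$ some $r$-colouring of $H$ had no monochromatic $s+P\subseteq H$ with $P\in\mathcal{P}$, then a limit point of such colourings (along an exhausting sequence $H_1\subseteq H_2\subseteq\cdots$) would be an $r$-colouring of $S$ violating (1). Each condition "$s+P$ is monochromatic" depends on only finitely many coordinates, which is what makes the limit argument work. Hence for each $r$ there is a finite $H\subseteq S$ such that every $r$-colouring of $H$ admits $s\in S$ and $P\in\mathcal{P}$ with $s+P\subseteq H$ monochromatic. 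Only the finitely many $P$'s that fit in $H$ are relevant; call them $P_1,\dots,P_m$.

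\textbf{(1)$\Rightarrow$(2), main step.} Let $A$ be piecewise syndetic, witnessed by $G=\{t_1,\dots,t_r\}$, so $T=\bigcup_{t\in G}(-t+A)$ is thick. Take $H$ and $P_1,\dots,P_m$ from the compactness step for this $r$. Given any $x$ with $H+x\subseteq T$, colour each $h\in H$ by the least $k$ with $h+x\in -t_k+A$. This yields $s$, $j$ and $k$ with $s+P_j\subseteq H$ and $x+s+t_k+P_j\subseteq A$. Thus $z=x+s+t_k\in B_{P_j}=\{z: z+P_j\subseteq A\}$, and $z\in x+H+G$ up to the bounded shift $-P_j\cap\cdots$. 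More precisely $s\in H-P_j$, so $z\in x+E$ for the fixed finite set $E=(H-H)+G$, interpreted as the finite set of possible values of $s+t_k$.

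Hence $B=\bigcup_j B_{P_j}$ meets $x+E$ whenever $H+x\subseteq T$. Now I show $B$ is piecewise syndetic. Given finite $F$, thickness of $T$ gives $x$ with $F+H+x\subseteq T$. For each $f\in F$, the set $f+x+E$ meets $B$, so $F+x\subseteq\bigcup_{e\in E}(-e+B)$ with $E$ independent of $F$. This is exactly the definition of piecewise syndeticity. Finally, partition regularity of piecewise syndetic sets applied to the finite union $B=\bigcup_{j\le m}B_{P_j}$ gives a single $P_j\in\mathcal{P}$ with $B_{P_j}$ piecewise syndetic, which is (2).

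\textbf{Main obstacle.} The delicate point is keeping the "window" $E$ independent of $F$. This is exactly why the compactness step producing a single finite $H$ (depending only on $r=|G|$) must come first. The bookkeeping that $s$ ranges over a fixed finite set, since $s+P_j\subseteq H$ forces $s\in H-p$ for any $p\in P_j$, is the step I would check most carefully. In a semigroup without inverses I would instead take $E$ to be the finite set of all possible values $s+t_k$.
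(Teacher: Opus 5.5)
Your proof is correct up to one bookkeeping repair, and it takes a genuinely different route from the paper.

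\textbf{The repair.} The assertion that only finitely many $P\in\mathcal{P}$ fit in $H$ is false in general. Take $(\mathbb{Z},+)$ with $\mathcal{P}=\{\{a,a+1\}:a\in\mathbb{Z}\}$ and $H=\{0,1\}$. Then every $P$ fits, with $s=-a$ unbounded. So neither the list $P_1,\dots,P_m$ nor the set of values $s+t_k$ is finite as you defined them; note also that $s\in H-P_j$ does not place $s$ in $H-H$. The fix is easy, because $H$ has only $r^{|H|}$ colourings. Fix one witness $(s_c,P_c)$ for each colouring $c$ of $H$, let $P_1,\dots,P_m$ be the chosen sets, and put $E=\{s_c+t_k\}$. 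This $E$ is finite, independent of $F$, and needs no inverses, and the rest of your main step then goes through verbatim.

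\textbf{Comparison with the paper.} The paper does not prove this statement itself, since it is quoted from Moreira. It does prove the same equivalence for arbitrary commutative semigroups in Theorem~\ref{abun piecewise}, and it never finitizes (1). It takes $p\in K(\beta S)\cap\overline{A}$ and uses that $B=\{x:-x+A\in p\}$ is syndetic. It applies (1) directly to the finite cover $S=\bigcup_i(-x_i+B)$ to get $x_i+h+P\subseteq B$. Then $C=\bigcap_{t\in x_i+h+P}(-t+A)$ lies in $p$, hence is piecewise syndetic, and $x_i+h+C\subseteq\{s:s+P\subseteq A\}$. That argument is shorter, yields a single $P$ at once with no final partition-regularity step, and uses no countability.

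Your route replaces the minimal-ultrafilter step with a compactness step and an explicit ``colour by translate'' window; the compactness step does not need countability either, by Tychonoff. What it buys is a more combinatorial proof with uniform control. The window $E$ witnessing piecewise syndeticity of $\bigcup_j\{s:s+P_j\subseteq A\}$ depends only on $G$ and the finite configuration $H$. Ultrafilters enter only through partition regularity of piecewise syndetic sets, which itself has an elementary proof.
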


Since we are not restricted to countable commutative semigroups, we now
establish an analogue of Theorem~\ref{Moreira PS} for arbitrary
commutative semigroups. We begin with the following lemma from
\cite[Theorem~4.39]{HS12}.

\begin{lemma}\label{PS Algeb Cha}
Let $(S,+)$ be a semigroup and let $p\in\beta S$. The following
statements are equivalent:
\begin{itemize}
\item[(a)] $p\in K(\beta S)$.
\item[(b)] For every $A\in p$, the set
\[
\{x\in S : -x+A\in p\}
\]
is syndetic.
\item[(c)] For every $q\in\beta S$, we have $p\in\beta S+q+p$.
\end{itemize}
\end{lemma}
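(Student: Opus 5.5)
The plan is to prove the cycle (a)$\Rightarrow$(c)$\Rightarrow$(a), and then (a)$\Rightarrow$(b)$\Rightarrow$(a), using only that minimal left ideals of $\beta S$ have the form $\beta S+q$, that $K(\beta S)$ is a two-sided ideal, and compactness. Throughout, for $A\subseteq S$ I write $B_A=\{x\in S:-x+A\in p\}$. The basic translation of the operation is that $A\in u+p$ if and only if $B_A\in u$.

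\textbf{(a)$\Leftrightarrow$(c).} Assume $p\in K(\beta S)$, so $p$ lies in a minimal left ideal $L$, and $L=\beta S+p$. For any $q\in\beta S$ we have $q+p\in L$. Hence $\beta S+q+p$ is a left ideal contained in $L$, and by minimality it equals $L$, so it contains $p$. Conversely, assume (c) and pick any $q\in K(\beta S)$. Since $K(\beta S)$ is a two-sided ideal, $\beta S+q+p\subseteq K(\beta S)$, and therefore $p\in K(\beta S)$.

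\textbf{(a)$\Rightarrow$(b).} Let $A\in p$ and suppose $B_A$ is not syndetic.
\begin{itemize}
\item Then for every finite $F\subseteq S$ the set $S\setminus\bigcup_{t\in F}(-t+B_A)$ is nonempty. So the family $\{S\setminus(-t+B_A):t\in S\}$ has the finite intersection property, and there is $q\in\beta S$ with $-t+B_A\notin q$ for all $t\in S$.
\item This says $t+q\notin\overline{B_A}$ for every $t\in S$. Since $\overline{B_A}$ is clopen and $\beta S+q=\mathrm{cl}(S+q)$ by continuity of $\rho_q$, we get $(\beta S+q)\cap\overline{B_A}=\emptyset$.
\item By (c), which we already know follows from (a), write $p=r+q+p$ and set $u=r+q\in\beta S+q$. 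Then $u+p=p\ni A$, so $B_A\in u$, which contradicts the previous step.
\end{itemize}

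\textbf{(b)$\Rightarrow$(a).} Fix a minimal left ideal $L=\beta S+q$; it is closed.
\begin{itemize}
\item For $A\in p$, syndeticity gives a finite $F$ with $S=\bigcup_{t\in F}(-t+B_A)$. Thus for any $v\in L$ some $-t+B_A\in v$, i.e. $t+v\in\overline{B_A}\cap L$. So each set $\overline{B_A}\cap L$ is nonempty.
\item Since $B_{A\cap A'}\subseteq B_A\cap B_{A'}$, these closed sets have the finite intersection property. By compactness there is $u\in L$ with $B_A\in u$ for all $A\in p$.
\item That means every $A\in p$ lies in $u+p$, so $u+p=p$ as ultrafilters. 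Hence $p\in L+p\subseteq L\subseteq K(\beta S)$.
\end{itemize}

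The only delicate step is the one in (a)$\Rightarrow$(b) producing $q$ with $\beta S+q$ disjoint from $\overline{B_A}$. It needs the identification $\beta S+q=\mathrm{cl}(S+q)$, which comes from continuity of $\rho_q$ and the fact that $\overline{B_A}$ is clopen. Everything else is routine ultrafilter bookkeeping.
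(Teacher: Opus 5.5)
This is correct and is essentially the standard proof of \cite[Theorem~4.39]{HS12}, which the paper quotes without proof. Your (a)$\Rightarrow$(b) is the textbook argument, and your (b)$\Rightarrow$(a) is the textbook compactness step for (b)$\Rightarrow$(c), carried out inside a fixed minimal left ideal.

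One line needs repair. In (b)$\Rightarrow$(a), the inclusion $L+p\subseteq L$ is false in general: if $p$ lies in a minimal left ideal $L'\neq L$, then $L+p\subseteq\beta S+p=L'$, which is disjoint from $L$. You do not need that inclusion, since $p=u+p\in K(\beta S)+\beta S\subseteq K(\beta S)$. This uses only that $K(\beta S)$ is a right ideal, as in your (c)$\Rightarrow$(a).
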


\begin{theorem}\label{abun piecewise}
Let $S$ be a commutative semigroup and let $\mathcal{P}$ be a collection
of finite subsets of $S$. Then the following statements are equivalent:
\begin{itemize}
\item[(1)] For every finite coloring
$S=\bigcup_{i=1}^{r} C_i$, there exist
$i\in\{1,2,\ldots,r\}$, $P\in\mathcal{P}$, and $s\in S$ such that
$s+P\subseteq C_i$.

\item[(2)] For every piecewise syndetic set $A\subseteq S$, there exists
$P\in\mathcal{P}$ such that the set
\[
\{s\in S : s+P\subseteq A\}
\]
is piecewise syndetic.
\end{itemize}
\end{theorem}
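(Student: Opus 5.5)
The implication (2)$\Rightarrow$(1) is the easy direction. Given a finite coloring $S=\bigcup_{i=1}^{r}C_i$, some $C_i$ is piecewise syndetic: take any minimal idempotent $p\in K(\beta S)$, choose the $C_i$ with $C_i\in p$, and apply Lemma~\ref{PS Algeb Cha}(b), which makes $\{x:-x+C_i\in p\}$ syndetic. It is then standard that members of ultrafilters in $K(\beta S)$ are piecewise syndetic (\cite[Theorem~4.40]{HS12}). Applying (2) to $A=C_i$ yields $P\in\mathcal{P}$ with $\{s:s+P\subseteq C_i\}$ piecewise syndetic, and in particular nonempty. So there is $s$ with $s+P\subseteq C_i$, which is (1).

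For (1)$\Rightarrow$(2) I would avoid the countability used in Theorem~\ref{Moreira PS} and work in $\beta S$. First I would show that (1) yields, for every $q\in\beta S$, some $P\in\mathcal{P}$ with $\bigcap_{t\in P}(-t+B)\neq\emptyset$ for every $B\in q$. The argument is by contradiction. If this fails, then for each $P\in\mathcal{P}$ there is $B_P\in q$ with no translate $s+P$ inside $B_P$. We cannot intersect infinitely many of the $B_P$, so instead I would use the compactness trick: $q$ is an ultrafilter, hence $2$-colourings are decided by $q$. Passing to a single coloring requires that (1) be strengthened to a compactness form, namely that for each $r$ there is a finite $\mathcal{P}_0\subseteq\mathcal{P}$ working for all $r$-colorings. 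This compactness form follows from (1) by the usual Tychonoff argument on $\{1,\dots,r\}^S$. The point is that the set of colorings for which a fixed $P$ and $s$ produce a monochromatic $s+P$ is clopen, and the finite intersection property then gives finitely many $P$'s. With $\mathcal{P}_0$ finite, set $B=\bigcap_{P\in\mathcal{P}_0}B_P\in q$ and colour $S$ by $B$ versus $S\setminus B$, refined suitably. This is the step I expect to be delicate: the relevant statement is really ``$B$ itself contains a translate of some $P$'', and a $2$-colouring alone does not force the monochromatic translate to lie in $B$. So I would instead apply (1) to colorings induced by $q$-large sets via the ultrafilter. Concretely, I would use the standard fact that a set in a minimal ultrafilter is piecewise syndetic, so finitely many translates $-t+A$, $t\in G$, cover arbitrarily large finite sets $F+x$. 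Colouring $F+x$ by the least $t\in G$ with $y\in -t+A$ (and making the colouring irrelevant outside) gives $|G|$ colours. Compactness then provides $P\in\mathcal{P}_0$ and $s$ with $s+P\subseteq -t+A$, that is, $s+t+P\subseteq A$.

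Finally, to obtain piecewise syndeticity of $D_P=\{s:s+P\subseteq A\}$, I would use the characterization that a set is piecewise syndetic iff its closure meets $K(\beta S)$. Take $p\in K(\beta S)$ with $A\in p$ and write $A^\star=\{x:-x+A\in p\}$, which is syndetic by Lemma~\ref{PS Algeb Cha}(b). For each $P\in\mathcal{P}_0$ consider $E_P=\{s:s+P\subseteq A^\star\}=\{s:-s+\bigcap_{t\in P}(-t+A)\in p\}$. Since $A^\star$ is syndetic, hence piecewise syndetic, the previous paragraph applied to $A^\star$ shows that the finitely many sets $E_P$, $P\in\mathcal{P}_0$, together witness that $\bigcup_{P}E_P$ is thick-translate-covering. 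I would then argue, using the partition regularity of piecewise syndeticity over the finite union indexed by $\mathcal{P}_0$, that some $E_P$ meets a minimal left ideal's closure appropriately. Given $s\in E_P$, we have $\bigcap_{t\in P}(-t+A)\in s+p$, and $s+p\in K(\beta S)$. Hence $-s+D_P\in p$ for such $s$, so $D_P$ belongs to a minimal ultrafilter and is piecewise syndetic. The main obstacle is arranging the compactness reduction to a finite $\mathcal{P}_0$ and the partition-regularity step cleanly without countability; I would handle both by the Tychonoff/finite-intersection argument described above.
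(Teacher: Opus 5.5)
Both directions are correct once the detours are removed, and the core of your (1)$\Rightarrow$(2) argument is the paper's. Take $p\in K(\beta S)\cap\overline{A}$ and note that $A^\star=\{x:-x+A\in p\}$ is syndetic. Produce one $P\in\mathcal{P}$ and one point $s$ with $s+P\subseteq A^\star$. Then conclude $D_P=\bigcap_{t\in P}(-t+A)\in s+p\in K(\beta S)$. The paper phrases the last step as $C=\bigcap_{t\in s+P}(-t+A)\in p$ with $s+C\subseteq D_P$, which is the same fact.

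Three things should be cut.

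\textbf{The opening claim of your second paragraph is false.} You claim that for every $q\in\beta S$ a single $P$ has a translate inside every $B\in q$. Take $S=\mathbb{Z}$ and $\mathcal{P}=\{\{0,d\}:d\geq 1\}$. Condition (1) holds by pigeonhole. Yet for any $q$ and any $d$, whichever of the two sets $\{n:\lfloor n/d\rfloor\equiv c \pmod 2\}$, $c\in\{0,1\}$, belongs to $q$ contains no $s+\{0,d\}$. You do abandon this claim, but it should not appear.

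\textbf{No compactness is needed.} Since $A^\star$ is syndetic, $S=\bigcup_{t\in G}(-t+A^\star)$ for a finite $G$. This is already a finite coloring of all of $S$, so (1) applies directly and gives $h+P\subseteq -t+A^\star$, i.e.\ $t+h\in E_P$. Compactness would only be needed to apply (1) inside finite windows $F+x$ of a merely piecewise syndetic set, which you never have to do. So the obstacle you name about avoiding countability does not arise in this argument.

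\textbf{The ``partition regularity'' step is superfluous.} As your own final two sentences show, a single point of a single $E_P$ suffices.
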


\begin{proof}
The implication $(2)\Rightarrow(1)$ is immediate.

To prove $(1)\Rightarrow(2)$, let
$p\in K(\beta S)\cap\overline{A}$. By
Lemma~\ref{PS Algeb Cha}, the set
\[
B=\{x\in S : -x+A\in p\}
\]
is syndetic. Hence there exist
$x_1,\ldots,x_\ell\in S$ such that
\[
S=\bigcup_{i=1}^{\ell} (-x_i+B).
\]
By~(1), there exist $P\in\mathcal{P}$ and $h\in S$ such that
\[
h+P\subseteq -x_i+B
\]
for some $i\in\{1,2,\ldots,\ell\}$, which implies
$x_i+h+P\subseteq B$.

Let
\[
C=\bigcap_{t\in x_i+h+P} (-t+A).
\]
Then $C\in p$, and hence $C$ is piecewise syndetic. Since $S$ is
commutative,
\[
x_i+h+C\subseteq \{s\in S : s+P\subseteq A\}.
\]
Therefore, $\{s\in S : s+P\subseteq A\}$ is piecewise syndetic.
\end{proof}

\begin{proof}[Proof of Theorem~\ref{Ip pol van der LID PS}]
Let $A\subseteq R$ be a piecewise syndetic set. To prove the theorem, it
suffices to show that for any sequence
$\langle x_n\rangle_{n=1}^{\infty}$ in $R$, the set
\[
\left\{
a\in R :
\exists K\in\mathcal{P}_f(\mathbb{N}) \text{ such that }
a+f\!\left(\sum_{t\in K} x_t\right)\in A
\text{ for all } f\in F
\right\}
\]
is piecewise syndetic.

Define the family of finite subsets of $R$ by
\[
\mathcal{P}
=
\left\{
\left\{
f\!\left(\sum_{t\in H} x_t\right) : f\in F
\right\}
:
H\in\mathcal{P}_f(\mathbb{N})
\right\}.
\]

Let $r\in\mathbb{N}$ and suppose that
\[
R=\bigcup_{i=1}^{r} A_i
\]
is an arbitrary finite partition of $R$. By
Theorem~\ref{pvw ip in LID}, there exist
$H\in\mathcal{P}_f(\mathbb{N})$,
$i\in\{1,2,\ldots,r\}$, and $s\in R$ such that
\[
s+P\subseteq A_i,
\quad\text{where}\quad
P=\left\{
f\!\left(\sum_{t\in H} x_t\right) : f\in F
\right\}.
\]

Thus the family $\mathcal{P}$ satisfies condition~(1) of
Theorem~\ref{abun piecewise}. Since $A$ is piecewise syndetic,
condition~(2) of Theorem~\ref{abun piecewise} implies that there exists
$Q\in\mathcal{P}$ such that the set
\[
\left\{
a\in R : a+Q\subseteq A
\right\}
\]
is piecewise syndetic.

Because $Q\in\mathcal{P}$, there exists
$K\in\mathcal{P}_f(\mathbb{N})$ such that
\[
Q=
\left\{
f\!\left(\sum_{t\in K} x_t\right) : f\in F
\right\}.
\]
This completes the proof.
\end{proof}

\section{A refinement of  Moreira's theorem for LID}

The main result of this section is Theorem~\ref{main theorem}. As an
application, we derive Theorem~\ref{HS type moreira LID}. Before proving
the main result, we introduce several technical lemmas.

A \emph{semiring} is a triple $(S,+,\cdot)$ such that $(S,+)$ is a
commutative semigroup, $(S,\cdot)$ is a semigroup, and for all
$a,b,c\in S$,
\[
a(b+c)=ab+ac \quad \text{and} \quad (b+c)a=ba+ca.
\]

\begin{lemma}\label{distributive}\cite[Lemma~2.4]{HS24}
Let $(S,+,\cdot)$ be an infinite semiring. For all $x\in S$ and all
$p,q\in\beta S$,
\[
x(p+q)=xp+xq
\quad\text{and}\quad
(p+q)x=px+qx.
\]
\end{lemma}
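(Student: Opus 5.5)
We need to prove $x(p+q)=xp+xq$ and $(p+q)x=px+qx$ for $x\in S$, $p,q\in\beta S$. Here $xp$ denotes the image of $p$ under the continuous extension of the map $\ell_x:S\to S$, $s\mapsto xs$; equivalently, $xp=\{A\subseteq S: x^{-1}A\in p\}$, so $\lambda_x$ (multiplicatively) agrees with $\widetilde{\ell_x}:\beta S\to\beta S$, and it is continuous. The plan is the standard ``extend an identity by continuity in one variable at a time'' argument, using that the additive operation $+$ on $\beta S$ is right topological with $S$ in its topological center.

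\textbf{Step 1 (both variables in $S$).} For $s,t\in S$ we have $x(s+t)=xs+xt$ by the semiring distributive law.

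\textbf{Step 2 (extend in $q$).} Fix $s\in S$ and consider the two maps $\beta S\to\beta S$ given by $q\mapsto x(s+q)$ and $q\mapsto xs+xq$. The first is $\widetilde{\ell_x}\circ\lambda^+_s$, which is continuous because $s$ lies in the additive topological center and $\widetilde{\ell_x}$ is continuous. The second is $\lambda^+_{xs}\circ\widetilde{\ell_x}$, continuous since $xs\in S$. The two maps agree on the dense set $S$ by Step 1, and $\beta S$ is Hausdorff, so they agree everywhere: $x(s+q)=xs+xq$ for all $q\in\beta S$.

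\textbf{Step 3 (extend in $p$).} Now fix $q\in\beta S$ and compare $p\mapsto x(p+q)=\widetilde{\ell_x}\circ\rho^+_q(p)$ with $p\mapsto xp+xq=\rho^+_{xq}\circ\widetilde{\ell_x}(p)$. Both are continuous, since right translations $\rho^+$ are continuous in $(\beta S,+)$. By Step 2 they agree on $S$, hence on all of $\beta S$.

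The right distributive law $(p+q)x=px+qx$ follows by the identical argument with $r_x:s\mapsto sx$ and its extension $p\mapsto px$; there $px=\{A: Ax^{-1}\in p\}$ is again the continuous extension of a map $S\to S$, even though $\rho_x$ is multiplicatively a right translation.

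The only delicate point is bookkeeping of which translations are continuous. The order of extension must be: first the right-hand variable $q$, using left translation by an element of $S$; then $p$, using right translation by an arbitrary $q$. Reversing the order would require continuity of $\lambda^+_p$ for $p\notin S$, which fails in general. One should also confirm that the notation $xp$ coincides with the continuous extension of $\ell_x$; this is immediate from the ultrafilter formula for products given in the introduction.
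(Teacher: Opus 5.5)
The paper gives no proof of this lemma; it only cites \cite[Lemma~2.4]{HS24}. Your argument is correct and is the standard density-and-continuity proof of this fact. You extend $x(s+t)=xs+xt$ first in the right-hand variable, using $\lambda^+_s$ and $\lambda^+_{xs}$ with $s,xs\in S$. Only then do you extend in the left-hand variable, using the always-continuous right translations $\rho^+_q$ and $\rho^+_{xq}$ together with continuity of $p\mapsto xp$ and $p\mapsto px$ for $x\in S$. That is exactly the order the continuity constraints force, and the right-distributive case goes through the same way.

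An equally short alternative is to check membership directly from the ultrafilter formulas. One has $A\in xp+xq$ iff $\{s\in S: x^{-1}(-xs+A)\in q\}\in p$. By distributivity in $S$, $x^{-1}(-xs+A)=\{u: x(s+u)\in A\}=-s+x^{-1}A$. So the condition is precisely $x^{-1}A\in p+q$, i.e.\ $A\in x(p+q)$, and symmetrically for $(p+q)x$.
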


\begin{lemma}\label{cancelative}\cite[Lemma~8.1]{HS12}
Suppose that $S$ is a discrete semigroup. If $s$ is a left cancellable
element of $S$, then $s$ is also left cancellable in $\beta S$. The
analogous statement holds for right cancellable elements.
\end{lemma}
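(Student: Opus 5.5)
The statement to prove is Lemma~\ref{cancelative}: if $s$ is left cancellable in $S$, then $s$ is left cancellable in $\beta S$. That is, for $p,q\in\beta S$ with $sp=sq$ we need $p=q$. The plan is to compare ultrafilters through images of sets under left multiplication by $s$. The key fact is that for $x\in S$ and $p\in\beta S$, the product $sp=\lambda_s(p)$ is the image ultrafilter $\widetilde{\lambda_s}(p)$. Concretely, $B\in sp$ if and only if $s^{-1}B\in p$. This follows from the displayed characterization $A\in p\cdot q \iff \{x: x^{-1}A\in q\}\in p$, applied with the principal ultrafilter $s$ in the first coordinate. Equivalently, it follows from continuity of $\lambda_s$ and agreement on $S$.

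First I would take $A\in p$ and show that $sA=\{sa:a\in A\}\in sp$. It suffices to check $A\subseteq s^{-1}(sA)$, which is trivial: if $a\in A$ then $sa\in sA$. Since $A\in p$ and ultrafilters are upward closed, $s^{-1}(sA)\in p$, hence $sA\in sp$.

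Second, suppose $sp=sq$ but $p\neq q$. Then there is $A\subseteq S$ with $A\in p$ and $S\setminus A\in q$. By the first step, $sA\in sp=sq$ and $s(S\setminus A)\in sq$. Left cancellability gives $sA\cap s(S\setminus A)=\emptyset$: if $sa=sb$ with $a\in A$ and $b\notin A$, then $a=b$, which is a contradiction. So $sq$ contains two disjoint sets, which is impossible. Hence $p=q$.

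The right cancellable case is symmetric, but with one subtlety: $\rho_p$ is continuous while $\lambda_p$ need not be. So I would argue directly with $ps$ rather than by continuity. Using the product formula, $B\in ps$ if and only if $\{x: xs\in B\}\in p$, that is, $Bs^{-1}\in p$. Taking $B=As$, we have $A\subseteq (As)s^{-1}$, so $As\in ps$. The same disjointness argument, now using right cancellation, finishes the proof.

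The only step needing care is justifying the identification $B\in sp\iff s^{-1}B\in p$ from the stated multiplication rule; everything else is elementary.
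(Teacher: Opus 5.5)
Your proof is correct. The paper gives no proof of its own and simply cites Lemma~8.1 of Hindman--Strauss. The standard argument there is that $\lambda_s$ and $\rho_s$ on $\beta S$ are the continuous extensions of the injective maps $x\mapsto sx$ and $x\mapsto xs$ on $S$, and that the Stone--\v{C}ech extension of an injective map on a discrete space is injective. Your disjoint-images computation is exactly this fact written out at the level of ultrafilters.

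One correction to your aside. For $s\in S$, the map $p\mapsto ps$ is $\rho_s$, and it is continuous because $\beta S$ is right topological: $\rho_q$ is continuous for every $q\in\beta S$, in particular for $q=s$. So the right-cancellable case is fully symmetric and could also be handled by continuity. Your direct verification via $Bs^{-1}\in p$ remains valid.
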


Let $(S,+)$ be a commutative group and let $H$ be a subgroup of $S$. By
\cite[Theorem~1.16]{HS19}, $H$ is an $IP^{\star}$-set in $(S,+)$ if and
only if $H$ is piecewise syndetic in $S$. As a consequence,
$x\mathbb{Z}[x]$ is not piecewise syndetic in $(\mathbb{Z}[x],+)$, since
the $IP$-set $\mathbb{N}$ in $(\mathbb{Z}[x],+)$ has empty intersection
with $x\mathbb{Z}[x]$. In contrast, for LIDs we obtain the following
result.

\begin{lemma}\label{dilation ps}
Let $(R,+,\cdot)$ be a LID. If $A\subseteq R$ is piecewise syndetic in
$(R,+)$ and $r\in R\setminus\{0\}$, then $rA$ is piecewise syndetic in
$(R,+)$.
\end{lemma}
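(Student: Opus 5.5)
We argue directly from the definition of piecewise syndeticity, using the LID hypothesis only through the finite index of $rR$. Since $R$ is an integral domain and $r\neq 0$, the map $x\mapsto rx$ is an injective additive homomorphism $R\to R$. Its image is the ideal $rR$, which is nontrivial, so by the LID hypothesis it has finite index. Fix coset representatives $c_1,\ldots,c_m$ of $rR$ in $(R,+)$, so that $R=\bigcup_{j=1}^m (c_j+rR)$.

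Since $A$ is piecewise syndetic, choose $G\in\mathcal{P}_f(R)$ such that for every $F\in\mathcal{P}_f(R)$ there is $x\in R$ with
\[
F+x\subseteq \bigcup_{t\in G}(-t+A).
\]
We claim that $G'=\{rt+c_j : t\in G,\ 1\le j\le m\}$ witnesses piecewise syndeticity of $rA$.

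Let $F'\in\mathcal{P}_f(R)$ be given. Each $u\in F'$ can be written as $u=c_{j(u)}+r v_u$ for some index $j(u)$ and some $v_u\in R$. Put $F=\{v_u : u\in F'\}$, which is finite. Pick $x$ with $F+x\subseteq\bigcup_{t\in G}(-t+A)$, and set $x'=rx$.

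Now take $u\in F'$. There is $t\in G$ with $v_u+x+t\in A$, and multiplying by $r$ gives $rv_u+rx+rt\in rA$. Hence
\[
u+x'+\bigl(rt-c_{j(u)}\bigr)=c_{j(u)}+rv_u+rx+rt-c_{j(u)}\in rA.
\]
So we should use translates by $rt-c_j$ instead of $rt+c_j$; accordingly, redefine $G'=\{rt-c_j : t\in G,\ 1\le j\le m\}$. With this choice, $u+x'\in -g+rA$ for $g=rt-c_{j(u)}\in G'$. Therefore $F'+x'\subseteq\bigcup_{g\in G'}(-g+rA)$, and $rA$ is piecewise syndetic.

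The only real point is the coset decomposition, which is where the LID hypothesis enters. The rest is bookkeeping with the additive-inverse convention for $-t+A$. This is legitimate because $R$ is a ring, so $-c_j$ exists.
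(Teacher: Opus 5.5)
Your proof is correct, but it takes a different route from the paper: the paper gives no argument of its own for this lemma and simply invokes Theorem~7.4 of \cite{M17}. You instead verify piecewise syndeticity directly from the definition. You decompose each element of $F'$ along the cosets $R=\bigcup_{j=1}^m(c_j+rR)$, apply the witness $G$ for $A$ to the finite set of pulled-back elements $v_u$, multiply by $r$, and absorb the coset representatives into the enlarged finite set $G'=\{rt-c_j : t\in G,\ 1\le j\le m\}$. This gives a self-contained, elementary proof, and it shows exactly which hypothesis matters: only that $rR$ has finite index in $(R,+)$, which is where the LID assumption enters. The injectivity of $x\mapsto rx$, and hence the integral-domain hypothesis, is never used. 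So your argument actually proves a more general fact: any additive endomorphism of an abelian group with finite-index image sends piecewise syndetic sets to piecewise syndetic sets. The citation is shorter but hides this. In a final write-up, define $G'$ with $-c_j$ from the start rather than correcting the choice partway through.
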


\begin{proof}
This follows from \cite[Theorem~7.4]{M17}.
\end{proof}

We also obtain the following analogue of \cite[Lemma~1.7]{HS24}.

\begin{lemma}\label{div ps}
Let $(R,+,\cdot)$ be a LID. Suppose that $A\subseteq R$ is piecewise
syndetic in $(R,+)$ and that $A\subseteq yR$ for some
$y\in R\setminus\{0\}$. Then $A/y$ is piecewise syndetic in $(R,+)$.
\end{lemma}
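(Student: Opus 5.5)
The plan is to transport piecewise syndeticity of $A$ inside the subgroup $yR$ back to $R$ through the additive isomorphism $\mu:R\to yR$, $\mu(t)=yt$. Since $R$ is an integral domain and $y\neq 0$, $\mu$ is injective, so $A/y=\mu^{-1}(A)$ is well defined and $\mu(A/y)=A$.

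\textbf{Step 1: $yR$ has finite index in $R$.} Since $R$ is a LID and $yR$ is a nontrivial ideal, there is a finite set $E\subseteq R$ with
\[
R=\bigcup_{e\in E}(e+yR).
\]

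\textbf{Step 2: the witnesses can be taken in $yR$.} By the definition of piecewise syndeticity, there is $G\in\mathcal{P}_f(R)$ such that for every $F\in\mathcal{P}_f(R)$ some $x\in R$ satisfies $F+x\subseteq\bigcup_{t\in G}(-t+A)$.

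First, replace $G$ by a subset of $yR$. Take $z\in F+x$ and $t\in G$ with $z+t\in A\subseteq yR$. Write $t=e+yt'$ with $e\in E$; then $z+e\in yR$. Set
\[
G'=\{t-e:t\in G,\ e\in E,\ t-e\in yR\}\subseteq yR .
\]
The aim is to arrange that every point of $F+x$ lying in $yR$ is covered by $\bigcup_{t\in G'}(-t+A)$.

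A cleaner route is to translate $x$ into $yR$ and keep only the points of $yR$. Given a finite $F_0\subseteq R$, apply the property to $F=F_0+E$ (with $0$ adjoined to $E$ if needed) to obtain $x$. Write $x=e_0+yx'$ with $e_0\in E$, and use $F\ni f-e_0$ for each $f\in F_0$ (so take $F=F_0-E$, which is legitimate in the ring $R$). Then
\[
F_0+yx'=(F_0-e_0)+x\subseteq\bigcup_{t\in G}(-t+A).
\]
Hence the definition of piecewise syndeticity may be applied with translates $x\in yR$ at no cost.

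\textbf{Step 3: restrict to $yR$ and pull back.} Take $F_0=yF_1$ with $F_1\in\mathcal{P}_f(R)$. For $z\in yF_1+yx'\subseteq yR$, choose $t\in G$ with $z+t\in A\subseteq yR$. This forces $t\in yR$, so only $G''=G\cap yR$ is used. Dividing by $y$ gives
\[
F_1+x'\subseteq\bigcup_{t\in G''}\bigl(-t/y+A/y\bigr).
\]
Therefore $A/y$ is piecewise syndetic with witness set $G''/y$.

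The only delicate point is the translation step. It needs $R$ to be a ring, so that negatives exist and $F_0-E$ makes sense, and it needs the finite index from the LID property so that $E$ is finite. I would double-check that enlarging $F$ by $-E$ keeps it finite and that the same $G$ works throughout. As an alternative, the converse direction can be handled via Lemma~\ref{dilation ps}, but the direct argument above seems sufficient.
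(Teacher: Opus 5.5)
Your proof is correct, and it takes a different route from the paper.

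\textbf{The paper's route.} The paper works in $\beta R$. It picks $x\in\overline{A}\cap K(\beta R)$ and writes $x=yz$ using $\overline{yR}=y\beta R$. It then uses Lemma~\ref{PS Algeb Cha}(c) to write $yz=u+yq+yz$ with $q\in K(\beta R)$, factors $u=yw$, and applies Lemma~\ref{distributive} and the cancellation Lemma~\ref{cancelative}. This gives $z=w+q+z\in K(\beta R)\cap\overline{A/y}$.

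\textbf{Your route.} You stay with the combinatorial definition. The finite index of $yR$ lets you move the translate into $yR$, at the cost of replacing $F_0$ by the finite set $F_0-E$. Then $A\subseteq yR$ forces every covering element $t$ into $G\cap yR$. Dividing by $y$, via the additive isomorphism $t\mapsto yt$ of $R$ onto $yR$, transfers the covering verbatim with the explicit witness set $(G\cap yR)/y$.

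\textbf{What each buys.} Your argument is elementary and self-contained, and it shows exactly where the LID hypothesis enters. It in fact proves the general statement that a subset of a finite-index subgroup which is piecewise syndetic in the ambient group is piecewise syndetic in the subgroup. The paper's argument is shorter given the ultrafilter machinery. It also yields a stronger structural fact: every minimal ultrafilter containing $yR$ has the form $yz$ with $z\in K(\beta R)$. Its unexplained step $u\in\overline{yR}$ is precisely where the coset structure of $yR$ is needed. If $e+yR\in u$, then $e+yR\in u+yq+yz=yz$, which forces $e\in yR$.

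\textbf{Write-up.} Delete the abandoned first attempt in Step~2 (the set $G'$) and the false start $F=F_0+E$. Only the choice $F=F_0-E$ is used.
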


\begin{proof}
Pick $x\in \overline{A}\cap K(\beta R)$. Since
$x\in \overline{yR}=y\beta R$, choose $z\in\beta R$ such that $x=yz$.
Let $q\in K(\beta R)$. Then $yq\in\beta R$, so choose $u\in\beta R$ such
that
\[
yz=u+yq+yz.
\]
Since $u\in \overline{yR}=y\beta R$, there exists $w\in\beta R$ with
$u=yw$. Hence
\[
yz=yw+yq+yz.
\]
By Lemma~\ref{distributive}, this implies
\[
yz=y(w+q+z).
\]
Since $R$ is an integral domain, Lemma~\ref{cancelative} yields
\[
z=w+q+z\in K(\beta R)\cap \overline{A/y}.
\]
\end{proof}

\begin{definition}[\textbf{Uniqueness of finite products}]
Let $(S,\cdot)$ be a semigroup, let $m\in\mathbb{N}$, and let
$\langle y_t\rangle_{t=1}^{m}$ be a sequence in $S$. The sequence
satisfies \emph{uniqueness of finite products} if, whenever
$H,K\in\mathcal{P}_f(\mathbb{N})$ with $H\neq K$, we have
\[
\prod_{t\in H} y_t \neq \prod_{t\in K} y_t.
\]
An infinite sequence $\langle y_t\rangle_{t=1}^{\infty}$ satisfies
uniqueness of finite products if the same condition holds for all finite
$H,K\subseteq\mathbb{N}$.
\end{definition}

\begin{lemma}\label{UFP LID}
Let $(R,+,\cdot)$ be an integral domain with additive identity $0$ and
multiplicative identity $1$. Let $m\in\mathbb{N}$ and let
$\langle y_t\rangle_{t=1}^{m}$ be a sequence such that
\[
FP(\langle y_t\rangle_{t=1}^{m})\subseteq R\setminus\{0,1\}
\]
and satisfying uniqueness of finite products. If $A$ is an infinite
subset of $R$, then there exists $y_{m+1}\in A$ such that
\[
FP(\langle y_t\rangle_{t=1}^{m+1})\subseteq A\setminus\{0,1\},
\]
and $\langle y_t\rangle_{t=1}^{m+1}$ satisfies uniqueness of finite
products.
\end{lemma}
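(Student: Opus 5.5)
The plan is to show that only finitely many elements of $R$ are excluded as candidates for $y_{m+1}$; since $A$ is infinite, some element of $A$ remains.

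\textbf{The two requirements.} The new products in $FP(\langle y_t\rangle_{t=1}^{m+1})$ are $y_{m+1}$ itself and the elements $y_{m+1}\prod_{t\in H}y_t$ with $\emptyset\neq H\subseteq\{1,\ldots,m\}$. Write $P=FP(\langle y_t\rangle_{t=1}^{m})$; this is a finite set with at most $2^m-1$ elements, and by hypothesis $P\subseteq R\setminus\{0,1\}$. We must choose $y=y_{m+1}$ so that:
\begin{enumerate}
\item[(i)] every new product lies outside $\{0,1\}$;
\item[(ii)] all products $\prod_{t\in H}y_t$, for distinct nonempty $H\subseteq\{1,\ldots,m+1\}$, are pairwise distinct.
\end{enumerate}
The containment in $A$ stated in the lemma should be read as $y_{m+1}\in A\setminus\{0,1\}$. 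Products that do not involve $y_{m+1}$ need not lie in $A$, since they are fixed in advance. The use of the lemma later only needs $y_{m+1}\in A$, uniqueness of finite products, and $FP\subseteq R\setminus\{0,1\}$.

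\textbf{Requirement (i).} It suffices to exclude $y=0$, since $R$ is a domain and $P$ contains no zero. We must also exclude $y=1$, and every $y$ with $yp=1$ for some $p\in P$. For each $p\in P$ there is at most one such $y$, namely $p^{-1}$ if $p$ is a unit. This excludes at most $|P|+2$ elements.

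\textbf{Requirement (ii).} Comparing two distinct index sets gives three cases.
\begin{itemize}
\item Neither set contains $m+1$: the products are already distinct by hypothesis.
\item Both sets contain $m+1$, say $yp=yp'$ (allowing $p$ or $p'$ to be the empty product $1$) with the underlying sets different. Cancellation in the domain, valid because $y\neq 0$, reduces this to the inductive hypothesis. In the case where one side is $y$ alone, it reduces to $p\neq 1$.
\item Exactly one set contains $m+1$: we need $y\,p\neq p'$ and $y\neq p'$ for $p\in P\cup\{1\}$ and $p'\in P$. For fixed $p\neq 0$ and $p'$, the equation $yp=p'$ has at most one solution $y$, again by cancellation.
\end{itemize}
So only finitely many $y$ are excluded, at most roughly $(|P|+1)^2$ of them.

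\textbf{Conclusion.} Since $A$ is infinite, pick $y_{m+1}\in A$ outside this finite exceptional set. The only point needing care is the bookkeeping of the empty product $1$ in the cancellation step, which is why $1\notin P$ is assumed. I expect no real obstacle beyond this finiteness count.
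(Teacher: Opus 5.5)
Your proposal is correct and follows the paper's argument. The paper also excludes the finite set $C$ of elements $x\in R\setminus\{0,1\}$ solving $x\alpha=\beta$ with $\alpha,\beta\in FP(\langle y_t\rangle_{t=1}^{m})\cup\{1\}$, proves $C$ finite by pigeonhole plus cancellation (where you count solutions directly), and then picks $y_{m+1}\in A\setminus C$. Your reading of the conclusion as $y_{m+1}\in A$ with $FP(\langle y_t\rangle_{t=1}^{m+1})\subseteq R\setminus\{0,1\}$ is what the paper's proof actually delivers, and your explicit exclusion of $y_{m+1}\in\{0,1\}$ fills a small omission in the paper's choice of $y_{m+1}$.
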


\begin{proof}
Let $B=FP(\langle y_t\rangle_{t=1}^{m})$ and define
\[
C=\left\{
x\in R\setminus\{0,1\} :
(\exists \alpha,\beta\in B\cup\{1\})(x\alpha=\beta)
\right\}.
\]
We claim that $C$ is finite. Suppose otherwise that $C$ is infinite and
let $\langle x_i\rangle_{i=1}^{\infty}$ be a sequence of distinct
elements of $C$. For each $i$, there exist
$\alpha_i,\beta_i\in B\cup\{1\}$ such that $x_i\alpha_i=\beta_i$.

Since $B\cup\{1\}$ is finite, there exists an infinite subset
$N_1\subseteq\mathbb{N}$ and $\beta\in B\cup\{1\}$ such that
$x_i\alpha_i=\beta$ for all $i\in N_1$. Similarly, there exist an
infinite subset $N_2\subseteq N_1$ and $\alpha\in B\cup\{1\}$ such that
$x_i\alpha=\beta$ for all $i\in N_2$. For $i,j\in N_2$ we have
$x_i\alpha=x_j\alpha$, and since $R$ is an integral domain, this implies
$x_i=x_j$, a contradiction. Hence $C$ is finite.

Choosing $y_{m+1}\in A\setminus C$ completes the proof.
\end{proof}

Since the additive group $(R,+)$ is infinite, every $IP^{\star}$-set in
$(R,+)$ is infinite. We are now ready to prove the main theorem of this
section.

\begin{theorem}\label{main theorem}
Let $R$ be a LID, let $r\in\mathbb{N}$, and suppose that
\[
R=\bigcup_{i=1}^{r} C_i .
\]
Then there exist $i\in\{1,2,\ldots,r\}$, an injective sequence
$\langle z_n\rangle_{n=1}^{\infty}$ in $S$, and a sequence
$\langle E_n\rangle_{n=1}^{\infty}$ of piecewise syndetic subsets of
$(R,+)$ such that, for each $n\in\mathbb{N}$,
\[
E_n \subseteq Sz_n,
\]
and whenever $w\in E_n$ and $xz_n=w$, we have
\[
\{xz_n,\, x+f(z_n): f\in F\}\subseteq C_i,
\]
where $F$ is a finite subset of $xR[x]$.
\end{theorem}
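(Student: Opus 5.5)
We follow the strategy of Hindman and Strauss \cite[Theorem~1.10]{HS24}, transported to a LID with the tools of Sections 2 and 3. Here $S$ denotes $R\setminus\{0\}$, and $F\subseteq xR[x]$ is fixed. The first step is to fix a minimal idempotent $p$ of $(\beta R,+)$ and to pass to a dilation of it. By Lemma~\ref{dilation ps}, for every $y\in S$ and every piecewise syndetic $A$, the set $yA$ is again piecewise syndetic. Combining this with Lemma~\ref{distributive} and the characterization in Lemma~\ref{PS Algeb Cha}, we would show that for each $y\in S$ there is $q\in K(\beta R,+)$ with $q\in y\beta R$. We then choose a single color class that is ``large'' for the relevant ultrafilters. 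Concretely, we pick $i$ such that $C_i\in q$ for some $q\in K(\beta R,+)$, so that $C_i$ is piecewise syndetic.

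The second step is to produce the admissible $z$'s. Apply Theorem~\ref{Ip pol van der LID PS} to the piecewise syndetic set $A=C_i$ and the finite family $F$. The set
\[
D=\Bigl\{z\in R:\bigcap_{f\in F}\bigl(-f(z)+C_i\bigr)\text{ is piecewise syndetic}\Bigr\}
\]
is then an $IP^\star$-set, hence infinite. For each $z\in D\setminus\{0\}$, put $B_z=\bigcap_{f\in F}(-f(z)+C_i)$. The elements $x\in B_z$ are exactly those with $x+f(z)\in C_i$ for all $f\in F$. We also need $xz\in C_i$, so we define
\[
E_z=zB_z\cap C_i=\{xz: x\in B_z,\ xz\in C_i\}\subseteq Sz .
\]
Once $E_z$ is shown to be piecewise syndetic, the conclusion follows. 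Indeed, if $w\in E_z$ and $xz=w$, then $x\in B_z$ by cancellation in the integral domain, so $\{xz,x+f(z):f\in F\}\subseteq C_i$.

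The third step is to obtain infinitely many distinct $z_n$. We enumerate injectively, using that $D$ is infinite. If uniqueness of finite products is needed to keep the $z_n$ distinct and outside $\{0,1\}$, we use Lemma~\ref{UFP LID} in the inductive choice.

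The main obstacle is showing that $zB_z\cap C_i$ is piecewise syndetic. Lemma~\ref{dilation ps} only gives that $zB_z$ is piecewise syndetic, and the intersection of two piecewise syndetic sets need not be piecewise syndetic. We would therefore choose $i$ more cleverly, working with a single ultrafilter $p\in K(\beta R,+)$ that also interacts with multiplication, in the spirit of \cite{HS24}. The aim is a $p$ such that both $C_i\in p$ and $\{z: z^{-1}C_i\in p'\}$ is large for a suitable minimal $p'$.

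To do this, we would run Theorem~\ref{Ip pol van der LID PS} on each color. For each $z$, we consider the finitely many colors $C_j$ meeting $zB^{(i)}_z$ in a piecewise syndetic set; at least one such $j$ exists, by partition regularity of piecewise syndeticity applied to $zB^{(i)}_z=\bigcup_j (zB^{(i)}_z\cap C_j)$. This yields a finite coloring $z\mapsto(i,j)$ of $D$. We then need to force $j=i$ on an infinite set, which is the genuinely hard point. Our first attempt would be to replace $C_i$ by the set $\{x: x z\in C_i\}$, to use Lemma~\ref{div ps} to pull piecewise syndetic subsets of $zR$ back to $R$, and to iterate over the finitely many colors with a pigeonhole argument along an $IP$-sequence.
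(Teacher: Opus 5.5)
You correctly isolate the main obstacle, but it is the whole content of the theorem, and the proposal does not resolve it. Choosing $i$ only so that $C_i$ is piecewise syndetic (or $C_i\in q$ for some $q\in K(\beta R,+)$) cannot work. Take $R=\mathbb{Z}$, let $F$ consist of the single polynomial $f(t)=t$, and let $C_1$ be the odd integers. Then $C_1$ is piecewise syndetic, yet $zB_z\cap C_1=\emptyset$ for every $z$: if $xz$ is odd then $x,z$ are odd, so $x+z$ is even.

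Your suggested repair gives no mechanism either. Pigeonholing the coloring $z\mapsto(i,j)$ yields infinitely many $z$ with the same pair, but never forces $j=i$. The set $\{x: xz\in C_i\}$ changes with $z$, so there is no fixed piecewise syndetic set to feed into Theorem~\ref{Ip pol van der LID PS}. The ultrafilter remarks in your first step are never used.

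The missing idea is to iterate your one-step construction, accepting whatever color appears, and to pigeonhole over the stages rather than over single values of $z$.

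\textbf{The construction.} The paper builds $y_1,y_2,\ldots$ and piecewise syndetic sets $B_0,B_1,\ldots$ with $B_k\subseteq C_{t_k}$. Given $B_k$, set $u_j=y_k\cdots y_j$ (and $u_{k+1}=1$, as in the first step). Then apply Theorem~\ref{Ip pol van der LID PS} to $B_k$ with the enlarged finite family of polynomials $y\mapsto u_jf(u_jy)$, $f\in F$, which still lie in $xR[x]$. This gives $y_{k+1}$, chosen via Lemma~\ref{UFP LID} for injectivity, such that $D_{k+1}=\bigcap_{j}\bigcap_{f\in F}\bigl(B_k-u_jf(u_jy_{k+1})\bigr)$ is piecewise syndetic. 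Next take $B_{k+1}=y_{k+1}D_{k+1}\cap C_{t_{k+1}}$ for a suitable color $t_{k+1}$; this is exactly your step, via Lemma~\ref{dilation ps} and partition regularity. The construction yields $B_m\subseteq y_m\cdots y_{j+1}B_j$ for all $j<m$.

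\textbf{Choosing the color.} Only now is $i$ chosen, with $t_k=i$ for infinitely many $k$, say $k(0)<k(1)<\cdots$. Set $z_n=y_{k(n)}\cdots y_{k(n-1)+1}$ and $E_n=B_{k(n)}$. Suppose $w=xz_n\in E_n$. Then $w\in C_{t_{k(n)}}=C_i$. Write $w=y_{k(n)}d$ with $d\in D_{k(n)}$, and let $u=y_{k(n)-1}\cdots y_{k(n-1)+1}$. Cancellation gives $d=xu$, and hence $u\bigl(x+f(z_n)\bigr)\in B_{k(n)-1}\subseteq uB_{k(n-1)}$. Therefore $x+f(z_n)\in B_{k(n-1)}\subseteq C_i$.

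So the product and the polynomial sums land in two different stages that happen to share a color. This replaces your attempt to force $j=i$ at a single step. It is also why the polynomial van der Waerden theorem must be applied to the family $u_jf(u_jy)$ rather than to $F$ alone.
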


\begin{proof}
Choose $t_0\in\{1,2,\ldots,r\}$ such that $C_{t_0}$ is piecewise syndetic
in $(R,+)$. By Theorem~\ref{Ip pol van der LID PS}, pick
$y_1\in S\setminus\{0,1\}$ such that
\[
\bigcap_{f\in F}\bigl(B_0-f(y_1)\bigr)
\]
is piecewise syndetic, and set
\[
D_1=\bigcap_{f\in F}\bigl(B_0-f(y_1)\bigr).
\]
By Lemma~\ref{dilation ps}, $y_1D_1$ is piecewise syndetic. Since
\[
y_1D_1=\bigcup_{i=1}^{r}\bigl(y_1D_1\cap C_i\bigr),
\]
there exists $t_1\in\{1,2,\ldots,r\}$ such that
$y_1D_1\cap C_{t_1}$ is piecewise syndetic. Let
\[
B_1=y_1D_1\cap C_{t_1}.
\]

Let $k\in\mathbb{N}$ and assume that we have chosen sequences
$\langle y_j\rangle_{j=1}^{k}$,
$\langle B_j\rangle_{j=0}^{k}$,
$\langle t_j\rangle_{j=0}^{k}$,
and $\langle D_j\rangle_{j=1}^{k}$ satisfying the following induction
hypotheses.

\begin{itemize}
\item[(1)] $y_j\in S$ for $j=1,\ldots,k$, and
\[
FP(\langle y_t\rangle_{t=1}^{k})\subseteq S\setminus\{0,1\}
\]
satisfies uniqueness of finite products.

\item[(2)] $D_j$ is piecewise syndetic for $j=1,\ldots,k$.

\item[(3)] $t_j\in\{1,2,\ldots,r\}$ for $j=1,\ldots,k$.

\item[(4)] $B_j$ is piecewise syndetic for $j=1,\ldots,k$.

\item[(5)] $B_j\subseteq C_{t_j}$ for $j=1,\ldots,k$.

\item[(6)] $B_j\subseteq y_jD_j$ for $j=1,\ldots,k$.

\item[(7)] For $j<m$ in $\{0,1,\ldots,k\}$,
\[
B_m\subseteq y_m y_{m-1}\cdots y_{j+1}B_j.
\]

\item[(8)] For $m=1,\ldots,k$,
\[
D_m\subseteq B_{m-1}\cap (B_{m-1}-y_m),
\]
and if $m>1$, then
\[
D_m\subseteq
\bigcap_{j=1}^{m-1}\bigcap_{f\in F}
\left(
B_{m-1}-y_{m-1}\cdots y_j
f\bigl(y_{m-1}\cdots y_jy_m\bigr)
\right).
\]
\end{itemize}

All hypotheses hold for $k=1$.

For $j=1,\ldots,k$, let $u_j=y_k y_{k-1}\cdots y_j$. By
Theorem~\ref{Ip pol van der LID PS},
\[
A=\left\{
y\in S:
\bigcap_{j=1}^{k}\bigcap_{f\in F}
\bigl(B_k-u_jf(u_jy)\bigr)
\text{ is piecewise syndetic}
\right\}
\]
is an $IP^\ast$-set in $(R,+)$. Choose $y_{k+1}\in A$ such that
\[
FP(\langle y_t\rangle_{t=1}^{k+1})\subseteq S\setminus\{0,1\}
\]
and uniqueness of finite products holds. Define
\[
D_{k+1}=
\bigcap_{j=1}^{k}\bigcap_{f\in F}
\bigl(B_k-u_jf(u_jy_{k+1})\bigr).
\]

By Lemma~\ref{dilation ps}, $y_{k+1}D_{k+1}$ is piecewise syndetic.
Choose $t_{k+1}\in\{1,2,\ldots,r\}$ such that
$y_{k+1}D_{k+1}\cap C_{t_{k+1}}$ is piecewise syndetic, and set
\[
B_{k+1}=y_{k+1}D_{k+1}\cap C_{t_{k+1}}.
\]

The induction is complete.

Choose $i\in\{1,2,\ldots,r\}$ such that
\[
G=\{k\in\mathbb{N}: t_k=i\}
\]
is infinite. Let $\langle k(n)\rangle_{n=0}^{\infty}$ be an increasing
sequence in $G$, and define
\[
z_n=y_{k(n)}y_{k(n)-1}\cdots y_{k(n-1)+1}.
\]
Then $\langle z_n\rangle_{n=1}^{\infty}$ is injective. For each
$n\in\mathbb{N}$, let $E_n=B_{k(n)}$. Each $E_n$ is piecewise syndetic and
\[
E_n\subseteq z_n S.
\]

Let $w\in E_n$ and suppose $xz_n=w$. Then $xz_n\in C_i$. It remains to
show that $x+f(z_n)\in C_i$ for all $f\in F$. We compute
\[
\begin{aligned}
z_n(x+f(z_n))
&= w+z_nf(z_n) \\
&\in B_{k(n)}+z_nf(z_n) \\
&\subseteq y_{k(n)}D_{k(n)}+z_nf(z_n) \\
&\subseteq y_{k(n)}B_{k(n)-1}.
\end{aligned}
\]
Hence $x+f(z_n)\in B_{k(n)-1}\subseteq C_i$.
\end{proof}

\begin{proof}[Proof of Theorem~\ref{HS type moreira LID}]
Let $i$, $\langle z_n\rangle_{n=1}^{\infty}$, and
$\langle E_n\rangle_{n=1}^{\infty}$ be as given by
Theorem~\ref{main theorem}. For $n\in\mathbb{N}$, let $y=z_n$. Then
\[
E_n y^{-1}\subseteq
\left\{
x\in\mathbb{N}:
\{xy,x+f(y):f\in F\}\subseteq C_i
\right\}.
\]
By Lemma~\ref{div ps}, $E_n y^{-1}$ is piecewise syndetic.
\end{proof}

\bibliographystyle{plain}

\end{document}